\documentclass[11pt]{article}

\usepackage[a4paper,total={150mm,237mm},left=30mm,top=30mm]{geometry}
\usepackage{amsmath,amssymb,amsthm,mathtools}
\usepackage{graphicx}
\usepackage[dvipsnames]{xcolor}
\usepackage{hyperref}

\newcommand{\R}{\mathbb{R}}
\newcommand{\CC}{\mathcal{C}}
\newcommand{\PP}{\mathcal{P}}
\newcommand{\sphere}[1]{\mathbb{S}^{#1}}
\newcommand{\SOn}[1]{\operatorname{SO}(#1)}
\newcommand{\On}[1]{\operatorname{O}(#1)}
\newcommand{\SO}{\operatorname{SO}}
\newcommand{\Prob}{\mathbb{P}}
\newcommand{\vct}[1]{#1}
\newcommand{\zero}{\vct{0}}
\newcommand{\link}{\operatorname{link}}
\newcommand{\pos}{\operatorname{pos}}
\newcommand{\lin}{\operatorname{lin}}
\newcommand{\relint}{\operatorname{relint}}
\newcommand{\sgn}{\operatorname{sgn}}

\newcommand{\eps}{\varepsilon}
\newcommand{\inter}{\operatorname{int}}
\newcommand{\cl}{\operatorname{cl}}
\newcommand{\bd}{\operatorname{bd}}

\theoremstyle{plain}
\newtheorem{theorem}{Theorem}[section]
\newtheorem{proposition}[theorem]{Proposition}
\newtheorem{lemma}[theorem]{Lemma}
\newtheorem{corollary}[theorem]{Corollary}
\theoremstyle{definition}
\newtheorem{definition}[theorem]{Definition}
\newtheorem{remark}[theorem]{Remark}

\title{Monotone invariant valuations on convex cones}
\author{Martin Lotz\\[0.5ex]
  {\small Mathematics Institute, University of Warwick}\\
  {\small Coventry CV4 7AL, UK}}
\date{September 2026}

\begin{document}
\maketitle

\begin{abstract}
We classify the monotone $\SOn d$-invariant valuations, $d\ge2$, on
the space of all closed convex cones.  They are precisely the linear combinations
of the conic intrinsic volumes with nondecreasing coefficients, or equivalently, the
nonnegative linear combinations of
intrinsic volume tails, up to an additive constant.  On nonzero pointed
cones we obtain the corresponding characterisation by Grassmann angles,
settling a conjecture of McMullen.
Every such valuation is automatically continuous in the spherical Hausdorff
topology and is $\On d$-invariant.  Our proof builds on
the signed simplex and averaging arguments of Wang and Wu, but does
not rely on the continuous classification theorem.
\end{abstract}

\section{Introduction}\label{sec:introduction}
Hadwiger's classification theorem is a cornerstone of convex and integral geometry, providing a complete classification of continuous, translation-invariant, and rotation-invariant valuations on convex bodies in Euclidean space $\R^d$ in terms of the intrinsic volumes. These are a family of geometric invariants that generalise volume, surface area, and Euler characteristic.
Conic (or equivalently, spherical) analogues of the intrinsic volumes have a rich history and have found a myriad of applications in areas such as compressive sensing, optimisation, and statistics.
Recently, Knoerr~\cite[Theorem~A]{Knoerr:Spherical-Hadwiger} proved
the spherical analogue of Hadwiger's theorem, settling a decades-old conjecture. Independently, an alternative proof was later provided by Wang and Wu~\cite{WangWu:Spherical-Hadwiger}.
In its conic formulation, the theorem states that every continuous,
$\SOn d$-invariant valuation on the space of closed convex cones in $\R^d$
is a linear combination of the conic intrinsic volumes $v_0,\ldots,v_d$.

In this paper, we are concerned with \emph{monotone} $\SOn d$-invariant valuations on the space $\CC(\R^d)$ of closed convex cones in $\R^d$.
The monotone part of Problem~49 of Gruber and Schneider~\cite{Gruber1979},
attributed to McMullen, asks for a characterisation of monotone,
rotation-invariant valuations on pointed cones.  We show that on all
nonzero pointed closed cones they are precisely, up to an additive
constant, the nonnegative linear combinations of Grassmann angles.

\begin{theorem}\label{thm:global-classification}
  Let $d\ge2$.  A valuation $\mu$ on $\CC(\R^d)$ is monotone and
  $\SOn d$-invariant if and only if
  \begin{equation}\label{eq:global-classification}
    \mu=\sum_{k=0}^{d}a_kv_k,
    \qquad a_0\le a_1\le\cdots\le a_d.
  \end{equation}
  Every such valuation is continuous and
  $\On d$-invariant.
  \end{theorem}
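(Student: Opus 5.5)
The plan has two directions: first show that any $\mu$ of the form \eqref{eq:global-classification} is monotone, then show that every monotone $\SOn d$-invariant valuation has this form.

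\emph{Sufficiency.} Summation by parts rewrites \eqref{eq:global-classification} as
\[
\mu=a_0\,v_0+\cdots+a_d\,v_d=a_0\sum_k v_k+\sum_{j=1}^d (a_j-a_{j-1})\,T_j,
\qquad T_j=\sum_{k\ge j}v_k .
\]
On $\CC(\R^d)$ we have $\sum_k v_k=1$, so it suffices to prove that each tail $T_j$ is monotone. For this I will use the conic kinematic (Crofton) formula. It expresses $T_j(C)$, up to the usual halving convention, as the probability that a uniformly random subspace $L$ of codimension $j$ meets $C$ nontrivially, namely $\tfrac12\Prob\{C\cap L\neq\{\zero\}\}$, with a small correction when $C$ is a subspace. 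The event $\{C\cap L\ne\{\zero\}\}$ is monotone in $C$. The correction for subspaces needs separate care: for a subspace $C$, $T_j(C)$ equals $1$ or $0$ according as $\dim C\ge j$ or not. This case is handled by a comparison using the fact that for a cone $C\subseteq D$, if $D$ is a subspace then the tails of $D$ dominate those of $C$. The invariance and continuity statements for these $\mu$ are then immediate, since each $v_k$ has both properties.

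\emph{Necessity.} Let $\mu$ be monotone and $\SOn d$-invariant. I will proceed in three steps.

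\begin{enumerate}
\item Reduce to polyhedral cones and show that a monotone valuation is determined by its values there. Every closed convex cone is approximated from inside and outside by polyhedral cones. Monotonicity sandwiches $\mu(C)$ between $\mu(P_{\mathrm{in}})$ and $\mu(P_{\mathrm{out}})$, so once $\mu$ on polyhedral cones is known to be a continuous combination of the $v_k$, continuity and the formula extend to all of $\CC(\R^d)$.
\item On polyhedral cones, follow Wang and Wu's signed simplex decomposition. Inclusion–exclusion writes a polyhedral cone as a signed combination of simplicial cones and subspaces. On subspaces, $\mu(L)=a_{\dim L}$, where the value depends only on the dimension by invariance, and monotonicity gives $a_0\le\cdots\le a_d$. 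Set $\nu=\mu-\sum_k a_kv_k$. Then $\nu$ is monotone up to a continuous perturbation, it is $\SOn d$-invariant, and it vanishes on all subspaces. The goal is to show $\nu\equiv0$.
\item Show that $\nu$ vanishes on simplicial cones, by induction on dimension. Restrict $\nu$ to cones contained in a lower-dimensional subspace and apply the inductive hypothesis there; this is where $\On{}$-invariance of lower-dimensional restrictions enters, since $\SOn d$ acts as $\On{d-1}$ on hyperplanes. It then remains to treat full-dimensional simplicial cones.
\end{enumerate}

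For full-dimensional simplicial cones, use Wang and Wu's averaging. Cut a simplicial cone into many thin pieces, and average $\nu$ over rotations of a thin cone so as to express $\nu$ of a small cone through its angle data. Monotonicity gives two-sided bounds comparing $\nu(C)$ with $\nu$ of slightly larger and smaller cones. Finer and finer subdivision shows that $\nu$ is simple on full-dimensional cones and proportional to $v_d$, that is, to the solid angle. Since $\nu$ vanishes on $\R^d$, the constant of proportionality is zero. $\On d$-invariance then follows because every $v_k$ is $\On d$-invariant.

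The main obstacle is this last step: obtaining continuity, or at least regularity, of $\nu$ on simplicial cones from monotonicity alone, without the continuous classification theorem. What complicates it is that $\nu$ is a difference of monotone functionals. I would first try to control $\mu$ itself rather than $\nu$. Monotonicity of $\mu$ bounds its oscillation on thin cones by $\mu$ of small ice-cream cones. Invariance together with subdivision of a fixed cone into $N$ congruent-up-to-rotation pieces forces those values to behave additively in the solid angle. This yields the needed Cauchy-type equation, whose monotone solutions are linear.
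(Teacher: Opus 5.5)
\textbf{There is a genuine gap in the necessity direction for $d\ge3$.}

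Your reduction to a simple remainder is sound. Take $a_k=\mu(L_k)$ and apply the $(d-1)$-dimensional statement to $\mu|_{\CC(W)}$, which is even $\On{W}$-invariant, as you note. Then $\nu=\mu-\sum_k a_kv_k$ is simple, $\SOn d$-invariant and vanishes on $\R^d$. The inner/outer polyhedral sandwich with monotonicity of $\mu$ then handles the passage from polyhedral to general cones, essentially as in the paper.

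The gap is the step that carries all the weight: proving that such a $\nu$ is a multiple of $v_d$ on full-dimensional simplicial cones.
\begin{itemize}
\item Your Cauchy-equation mechanism only works for $d=2$, where an angle splits into $N$ congruent sub-angles. For $d\ge3$ a generic simplicial cone admits no subdivision into pieces congruent under rotations. There is also no a priori reason for the value of a simple invariant valuation to depend only on solid angle; that is precisely Schneider's Problem~74.
\item Controlling oscillation through $\mu$ on small cones does not rescue this. $\nu$ is not monotone, and the bound $|\nu(R)|\le\mu(R)+|\sum_ka_kv_k(R)|$ cannot be summed over fine dissections. The reason is that $\sum_i\mu(R_i)$ also counts the lower-dimensional part of $\mu$. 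Already for $\mu=U_1$ in $d=3$ this sum is proportional to the total perimeter of the spherical triangles, and in general it diverges as the dissection is refined. This is also why Schneider's classification of nonnegative simple valuations cannot be invoked.
\item ``Averaging over rotations of a thin cone'' is not what Wang and Wu average.
\end{itemize}

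\textbf{What is missing} is the chain by which the paper proves the simple case. Write $C=\pos(p_1,\ldots,p_d)$, $F_j$ for its facets, $x=\sum_ja_j(x)p_j$, and $\bar\eta$ for the simple extension to all polyhedral cones.
\begin{enumerate}
\item A simple $\SOn d$-invariant valuation is already $\On d$-invariant. This follows from the incentre dissection of a simplicial cone into pieces $B_{ij},B_{ji}$ exchanged by hyperplane reflections. It is needed \emph{before} the averaging, not deduced at the end as in your plan.
\item The signed circuit identity gives, for every apex $x$ off finitely many great hyperspheres,
$\eta(C)=\mathbf 1_{-C}(x)\bar\eta(\R^d)+\sum_j\sgn(a_j(x))\,\eta(F_j+\R_{\ge0}x)$.
\item Integrate over $x\in\sphere{d-1}$. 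Reflection in $\lin F_j$ makes the $j$th summand odd, so only $\bar\eta(\R^d)v_d(C)$ survives.
\item The only regularity required is integrability of $x\mapsto\mu(F+\R_{\ge0}x)$, and monotonicity supplies it. Boundedness comes from subtracting the value on rays. Measurability holds because, in a hemisphere chart, this function is nonincreasing under translation by $\inter_H F$, so its sublevel sets have Lebesgue-null boundary.
\end{enumerate}
None of these four steps appears in your plan, and without them necessity is not established for $d\ge3$.

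\textbf{A smaller error in sufficiency.} Crofton's formula gives $\tfrac12\Prob\{C\cap L_{d-j}\ne\{\zero\}\}=U_j=v_{j+1}+v_{j+3}+\cdots$, not the full tail $T_j$. On non-subspace cones one has $T_j=U_{j-1}+U_j$, which is monotone. You must also treat a subspace $C$ inside a non-subspace $D$, which can be done via the lineality space of $D$. The paper avoids the case split by using $W_j(C)=\int_{G(d,j)}\gamma_L(\cl(C\vert L))\,\mathrm dL$.
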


For Grassmann angles $U_j$, related to the intrinsic volumes by Crofton's
formula (Proposition~\ref{prop:crofton}), we obtain the following
classification result.

  \begin{corollary}\label{cor:problem49}
    On the nonzero pointed cones, the
    monotone $\SOn d$-invariant valuations
    are precisely
    \[
      a_0U_0+a_1U_1+\cdots+a_{d-1}U_{d-1},
      \qquad a_0\in\R,\quad a_1,\ldots,a_{d-1}\ge0.
    \]
    \end{corollary}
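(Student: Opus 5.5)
The corollary should follow from Theorem~\ref{thm:global-classification}, once we check that valuations on nonzero pointed cones extend and that the conic intrinsic volumes can be rewritten in terms of Grassmann angles. We use Crofton's formula (Proposition~\ref{prop:crofton}) in the form $U_j(C)=2\sum_{i\ge 1,\ \text{odd}} v_{j+i}(C)$ for $C\ne L$ a subspace, with $U_j(C)$ the probability that a uniformly random subspace of codimension $j$ meets $C$ nontrivially.

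\emph{Step 1: tails.} On cones that are not subspaces (in particular nonzero pointed cones) we have $\sum_k(-1)^kv_k=0$. Hence the tail $t_j=\sum_{k\ge j}v_k$ equals $\tfrac12 U_{j-1}$ for $j\ge1$, and $t_0=1$. So on nonzero pointed cones
\[
\textstyle\sum_k a_kv_k=a_0+\sum_{j\ge1}(a_j-a_{j-1})t_j=a_0+\tfrac12\sum_{j=1}^{d}(a_j-a_{j-1})U_{j-1}.
\]
For a nonzero pointed cone we also have $U_0=1$. The term $U_{d-1}$ absorbs $a_d-a_{d-1}$, since $v_d=0$ on pointed cones other than $\R^d$; equivalently, $t_d=0$ and $U_{d-1}=2v_d+\dots$ need not be used. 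Let me adjust the bookkeeping: on pointed cones $v_d=0$, so only $a_0,\dots,a_{d-1}$ matter, and the coefficient of $U_{j-1}$ is $\tfrac12(a_j-a_{j-1})\ge0$ for $1\le j\le d-1$. The constant collects into the coefficient of $U_0$. This shows the forward inclusion, given monotone coefficients. Conversely, each $U_j$ with $j\ge1$ is monotone, since it is a hitting probability, and it is a rotation-invariant valuation; $U_0$ is constant on this class. So the listed functionals are monotone invariant valuations.

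\emph{Step 2: extension, the main obstacle.} Given a monotone $\SOn d$-invariant valuation $\mu$ on nonzero pointed cones, we must produce a monotone invariant valuation $\tilde\mu$ on all of $\CC(\R^d)$ that agrees with $\mu$ there. Then Theorem~\ref{thm:global-classification} and Step~1 finish the argument. I would not try to extend directly. Instead I would rerun the proof of the theorem restricted to polyhedral pointed cones. The signed simplex and averaging arguments of Wang--Wu only evaluate $\mu$ on simplicial cones and their intersections with halfspaces, and these are pointed. This argument should give $\mu=\sum_{k<d}a_kv_k$ on polyhedral pointed cones with $a_0\le\dots\le a_{d-1}$.

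The monotonicity-to-coefficient step compares $\mu$ on a face-like thin cone with $\mu$ on a fattening of it. For example, a $k$-dimensional simplicial cone sits inside $(k+1)$-dimensional ones, and shrinking the extra angle gives $a_{k}\le a_{k+1}$ in the limit. The same comparison works within pointed cones provided $k+1\le d-1$, or $k+1\le d$ for full-dimensional pointed cones.

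Passing from polyhedral to general pointed cones then uses monotonicity. For pointed $C$ we sandwich $P_n\subseteq C\subseteq Q_n$ with polyhedral pointed cones converging to $C$. The bounds $\mu(P_n)\le\mu(C)\le\mu(Q_n)$, together with the continuity of the $v_k$, then force $\mu(C)=\sum a_kv_k(C)$. The delicate point is ensuring that no degenerate behaviour at the boundary of pointedness is needed; I expect this to be the hardest step.
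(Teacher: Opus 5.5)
\textbf{There is a genuine gap: the overall strategy cannot work.} Your sufficiency direction is fine: each $U_j$ with $j\ge1$ is monotone on non-subspace cones, and $U_0$ is constant. The rest of Step~1, however, is miscomputed, and Step~2 aims at something false.

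\emph{Step 1.} In the paper's normalisation, your claim $t_j=\tfrac12U_{j-1}$ reads $W_j=U_{j-1}$. Crofton's formula actually gives $W_j=\sum_{k\ge j}v_k=U_{j-1}+U_j$ for $1\le j\le d$, with $U_d:=0$. For the quadrant in $\R^2$, $W_1=\tfrac34$ while $U_0=\tfrac12$. Also, $v_d$ is the solid angle, so it does not vanish on full-dimensional pointed cones. Done correctly, $\sum_k a_kv_k$ restricts on nonzero pointed cones to $(a_0+a_1)U_0+\sum_{j=1}^{d-1}(a_{j+1}-a_{j-1})U_j$. So, modulo constants, restrictions of globally monotone invariant valuations form the cone generated by $U_1,\ U_1+U_2,\ \dots,\ U_{d-2}+U_{d-1},\ U_{d-1}$. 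For $d\ge4$ this is strictly smaller than the cone in the corollary. For example, $U_2$ on nonzero pointed cones of $\R^4$ is monotone and invariant. Matching coefficients would force $a_2=a_0$, $a_4=a_2$ and $a_3=a_1+1$, which is impossible for $a_0\le\dots\le a_4$. Hence $U_2$ has \emph{no} monotone invariant extension to $\CC(\R^4)$. The extension you call the main obstacle does not exist in general, and the corollary cannot be obtained from Theorem~\ref{thm:global-classification}. This is the point of Remark~\ref{rem:monotonicity-domains}; in the paper the logic runs the other way.

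\emph{Step 2.} Your fallback target, $\mu=\sum_{k<d}a_kv_k$ with $a_0\le\dots\le a_{d-1}$ on pointed polyhedral cones, is also wrong. In $\R^2$, $v_1=\tfrac12$ and $v_0=\tfrac12-v_2$ on nonzero pointed cones, so $v_0+2v_1=\tfrac32-v_2$ has nondecreasing coefficients but is decreasing. The paper's route has two parts:
\begin{itemize}
\item It proves $\mu=\sum_{j=0}^{d-1}a_jU_j$ directly (Theorem~\ref{thm:regularity}).
\item It reads off the signs with pointed test cones $H_{m,\eps}$ approximating half-subspaces $H_m$. Since $U_i(H_m)=\tfrac12$ for $i<m$ and $0$ otherwise, monotonicity along $H_{1,\eps}\subseteq\dots\subseteq H_{d,\eps}$ gives $a_j\ge0$ (Proposition~\ref{prop:coefficient-signs}).
\end{itemize}
Moreover, ``rerunning Wang--Wu should give it'' omits exactly what makes the non-continuous case work:
\begin{itemize}
\item induction on $d$, with the lower-dimensional part $\nu=\sum_{j\ge1}b_jU_j$ fixed by intrinsicness and uniqueness of the $U_j$;
\item the simple remainder $\eta=\mu-\nu$ and its extension $\bar\eta$ to all polyhedral cones, needed because the stellar identity involves $\bar\eta(\R^d)$;
\item $\On d$-invariance via the incentre dissection, so that the reflected facet terms cancel;
\item crucially, boundedness of the apex functions $x\mapsto\eta(F+\R_{\ge0}x)$ (Lemma~\ref{lem:normalization}), and their measurability. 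The latter is derived from monotonicity of $\mu$, through sublevel sets stable under adding $\inter_H F$ (Lemmas~\ref{lem:measure} and~\ref{lem:measurable-coning}).
\end{itemize}
Without these, the averaging step in Theorem~\ref{thm:simple-integrable} is unjustified. Only your final sandwich step $P_n\subseteq C\subseteq Q_n$ matches the paper, and it is sound once the polyhedral representation is known.
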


\begin{remark}\label{rem:monotonicity-domains}
Note that in Corollary~\ref{cor:problem49}, both the valuation property and
  monotonicity are assumed only on the pointed domain, with no assumption
  of a monotone extension to all cones. This is in line with the corresponding question in~\cite[Section~3.3, p.~126]{Sch22:Convex-Cones} as well as~\cite[Problem~49]{Gruber1979}.
On all cones, monotonicity along a subspace flag forces
$a_0\le\cdots\le a_d$ once the intrinsic-volume representation is known,
since $v_k(L_j)=\delta_{kj}$ for linear subspaces $L_j$ with $\dim L_j=j$.
\end{remark}


Induction on dimension reduces the proof to showing that, under an
integrability condition implied by monotonicity, every simple,
$\SOn d$-invariant valuation $\eta$ on nonzero pointed polyhedral
cones is a multiple of spherical volume:
\begin{equation}\label{eq:simple-multiple}
  \eta(P)=c \sigma(P\cap \sphere{d-1}), \quad P\in \PP^*_p(\R^d),
\end{equation}
see Theorem~\ref{thm:simple-integrable}.
The continuous case corresponds to
Knoerr's Theorem~B~\cite{Knoerr:Spherical-Hadwiger} and
Schneider's Problem~74 in~\cite{Gruber1979}.

To establish~\eqref{eq:simple-multiple}, we adapt the signed simplex
and averaging ideas of Wang and Wu~\cite{WangWu:Spherical-Hadwiger}.
Finite dissections first show that a simple $\SOn d$-invariant valuation
is also invariant under reflections (Proposition~\ref{prop:polyhedral-reflections}).
We then average a signed
subdivision identity over a moving apex: reflection pairs cancel the
facet contributions, leaving only a spherical volume term. 
The crucial observation is that monotonicity of
the original valuation is enough to ensure the integrability condition needed for averaging. 
Finally, monotonicity is used in combination with 
inner and outer polyhedral approximations to extend the representation
to general pointed cones. We note that our proof also simplifies the averaging argument
in the continuous case treated by Wang and Wu.

\subsection{Historical notes and related work}\label{sec:historical-notes}

The Euclidean intrinsic volumes arise as the normalised coefficients of Steiner's
tube formula and are related to Minkowski's theory of mixed volumes;
see~\cite{Had57:Vorlesungen,Sch14:Convex-Bodies}.
Hadwiger~\cite{Had57:Vorlesungen} characterised
continuous rigid-motion-invariant valuations as their linear combinations.
The conic, or spherical, theory has roots in the
angle-sum relations of Sommerville~\cite{Som27:Angle-Sums} and their
development by McMullen~\cite{McM75:Nonlinear-Angle-Sum}.
Gr\"unbaum~\cite{Gru68:Grassmann-Angles} introduced Grassmann angles of
polytopes, while Glasauer~\cite{Glasauer95:Thesis} developed spherical
support measures and local Steiner and kinematic formulae for general
spherically convex bodies. Remarkably, Hadwiger's characterisation resisted
a generalisation to continuous, invariant valuations on the sphere until
Knoerr's recent breakthrough result~\cite{Knoerr:Spherical-Hadwiger}.

Conic intrinsic volumes also appear in several areas of applied mathematics.
In statistics, they are the mixture weights of the $\bar\chi^2$
distributions arising in likelihood-ratio tests under cone constraints;
see Takemura and Kuriki~\cite{TK97:Chi-Bar-Squared}.
Their mixed-volume representation also yields log-concavity of the conic
intrinsic volumes via the Alexandrov--Fenchel inequality, as observed by
Fu and Wang~\cite{fu2026logconcavityconicintrinsicvolumes}.
Their first moment, the statistical dimension, also enters risk bounds
for shape-constrained estimation, for example in~\cite{HWCS19:Isotonic-Regression} on isotonic regression.
In optimisation, they enter the probabilistic analysis of the conic
feasibility problem~\cite{Amelunxen:Thesis,AB15:Grassmann-Condition}.
In compressed sensing and signal recovery, intrinsic-volume concentration
and conic kinematic formulae yield sharp recovery thresholds and phase
transitions, as developed in~\cite{ALMT14:Living-Edge}.
Accounts of the conic theory and its relation
to spherical integral geometry can be found
in~\cite{AmelunxenLotz:Intrinsic-Volumes-Cones,Sch22:Convex-Cones}.

Monotonicity is a classical regularity assumption for translation-invariant
valuations on Euclidean convex bodies.  McMullen's
work~\cite{McM77:Valuations-Polytopes,McM90:Monotone-Valuations} shows
that every monotone translation-invariant real valuation on compact
convex sets is continuous in the Hausdorff metric.  Together with
Hadwiger's theorem, this gives the monotone classification under
translation and rotation invariance: on nonempty compact convex sets
$K\subseteq\R^d$, such valuations are precisely
\[
  \phi(K)=c_0+\sum_{j=1}^d c_jV_j(K),
  \qquad c_0\in\R,\quad c_1,\ldots,c_d\ge0,
\]
where $V_j$ denotes the $j$th Euclidean intrinsic volume.  The constant
term is unrestricted because monotonicity is imposed between nonempty
sets.

A related question of McMullen, recorded as
Problem~51 in~\cite{Gruber1979}, asks whether the homogeneous components
of a monotone translation-invariant valuation are themselves monotone.
Bernig and Fu~\cite[Theorem~2.12]{BF11:Hermitian-Integral-Geometry}
proved that this is indeed the case.  More recently, Ib\'a\~nez-Marcos,
Tradacete and Villanueva~\cite{ITV25:Noncontinuous-Valuations} used
automatic continuity on parallelotopes to strengthen Hadwiger's
characterisation of volume, replacing continuity by a boundedness
assumption.

The continuous and monotone spherical characterisation problems were
posed by McMullen in Problem~49 of~\cite{Gruber1979}; see
also~\cite[Section~3.3]{Sch22:Convex-Cones}.  The comment accompanying
that problem already mentions a reduction to Schneider's Problem~74 in the same volume,
which asks for a characterisation of spherical volume among continuous,
simple, rotation-invariant valuations on spherical polytopes.
Knoerr~\cite[Theorems~A and~B]{Knoerr:Spherical-Hadwiger} first settled the
continuous classification and the simple case on spherical polytopes contained
in open hemispheres, and the classification for spherical convex bodies follows
by approximation. His proof uses affine smooth valuations and the gnomonic
projection to reduce the problem to measurable, translation-invariant
valuations on Euclidean polytopes, where he applies the vanishing result
from~\cite[Proposition~5.4]{Knoerr:Polytopal-Hadwiger}.
Wang and Wu~\cite{WangWu:Spherical-Hadwiger} subsequently gave a different
proof using oriented spherical simplices and a signed coning transform.

\section{Preliminaries}\label{sec:preliminaries}
Throughout, $d\ge2$ is an integer unless otherwise stated, and
$\sphere{d-1}$ denotes the unit sphere in $\R^d$.
We write $\sigma$ for the completion of the normalised spherical measure on $\sphere{d-1}$.
We follow the notation and conventions of~\cite{Sch22:Convex-Cones}, to which we refer for more details
and background. In particular, we write $\inter$, $\cl$ and $\bd$ for interior, closure
and boundary, and $\pos$, $\lin$ and $\operatorname{conv}$ for positive,
linear and convex hulls, respectively.  Relative interior is denoted by
$\relint$, and $\inter_W$ denotes interior in a subspace $W$.

\paragraph{Convex cones.} A \emph{convex cone} is a nonempty convex set $C\subseteq\R^d$ that satisfies $\lambda C=C$ for any $\lambda>0$. We denote the set of closed convex cones in~$\R^d$ by~$\CC(\R^d)$, and we will refer to elements of $\CC(\R^d)$ simply as cones. The zero cone is $\{\zero\}$, and we write $\CC^*(\R^d)=\{C\in\CC(\R^d)\colon C\ne\{\zero\}\}$ for the set of nonzero cones.
A cone is pointed if $C\cap (-C)=\{\zero\}$, and we denote by $\CC_p^*(\R^d)$ the set of all nonzero pointed cones in $\R^d$. Denote by $\PP(\R^d)$ the set of all polyhedral cones in $\R^d$ and by $\PP_p^*(\R^d)$ the set of all nonzero pointed polyhedral cones.
We use the same notation with $\R^d$ replaced by a Euclidean subspace $W$. The dimension of a cone $C$ is denoted by $\dim C$ and is the dimension of its linear span.

\paragraph{Spherical convex sets.} We use the conic convention for
spherical convexity: $\mathcal K_s$ consists of the links
$\link C:=C\cap\sphere{d-1}$ of cones $C\in\CC(\R^d)$.
The link map is a bijection, with the zero cone corresponding to the
empty set.  Nonzero pointed cones correspond to the proper spherical
convex sets, namely the nonempty sets contained in an open hemisphere.
Spherical polytopes are links of nonzero polyhedral cones, and
$\dim(\link C)=\dim C-1$ for $C\ne\{\zero\}$.
A great hypersphere is $H\cap\sphere{d-1}$ for a linear hyperplane $H$.
For two nonempty closed subsets $X,Y\subseteq \sphere{d-1}$, the {\em Hausdorff distance} $d_H(X,Y)$ is defined as
\begin{equation*}
d_H(X,Y) := \max\left\{ \sup_{\vct{x}\in X} \inf_{\vct{y}\in Y} d_s(\vct{x},\vct{y}),\;\; \sup_{\vct{y}\in Y} \inf_{\vct{x}\in X} d_s(\vct{x},\vct{y}) \right\},
\end{equation*}
where $d_s$ denotes geodesic distance.  Set
$d_H(\varnothing,\varnothing)=0$ and $d_H(X,Y)=\infty$ when exactly one
of $X,Y$ is empty.  The resulting spherical (or angular) Hausdorff topology on
$\mathcal K_s$, and hence on $\CC(\R^d)$, has the empty set and full
sphere as isolated points.  Polyhedral cones are dense in this topology;
see \cite[Section~1.8]{Sch14:Convex-Bodies} for Euclidean polytope
approximation and \cite[Chapter~3]{Sch22:Convex-Cones} for its spherical
counterpart.

\paragraph{Valuations.} Valuations are set functionals that satisfy an inclusion--exclusion property.

\begin{definition}
A functional $\mu\colon\CC(\R^d)\to\R$ is called a (real) \emph{valuation} if
\begin{equation*}
    \mu(C_1\cup C_2)=\mu(C_1)+\mu(C_2)-\mu(C_1\cap C_2)
\end{equation*}
whenever $C_1,C_2\in\CC(\R^d)$ with $C_1\cup C_2\in\CC(\R^d)$.
\end{definition}

A valuation on $\CC(\R^d)$ is \emph{continuous} if it is continuous with respect to the spherical Hausdorff topology, \emph{monotone} if $C\subseteq D$ implies $\mu(C)\le\mu(D)$,
and it is \emph{simple} if it vanishes on every cone of dimension at most $d-1$.

For valuations on $\CC_p^*(\R^d)$, the same definitions are used with
all cones in that domain.  We may set $\mu(\{\zero\})=0$ for valuation
identities, but impose monotonicity only between nonzero cones.
Indeed, two nonzero pointed cones with convex pointed union have
nonzero intersection, so this convention does not restrict their values.

\paragraph{Intrinsic volumes and Grassmann angles.} Write $\On W$ and
$\SO(W)$ for the orthogonal and special orthogonal groups of a Euclidean
space $W$, with the usual abbreviations $\On d$ and $\SOn d$ for $W=\R^d$.
For $G\in \{\On{d}, \SOn{d}\}$, a valuation
is {\em invariant} under the action of $G$ if $\mu(gC)=\mu(C)$ for all $g\in G$ and all $C$ in its domain. Among invariant valuations, we are particularly interested in the \emph{intrinsic volumes}, $v_i\colon\CC(\R^d)\to\R$, $i\in \{0,\ldots,d\}$. There are different ways of defining the intrinsic volumes; a concise way is via Steiner's formula for a tubular neighbourhood around a spherically convex set~\cite[Chapter 4]{Sch22:Convex-Cones}. For a polyhedral cone there is a simple characterisation in terms of projections of random points. Let $\Pi_C(x)$ denote the orthogonal projection of a point $x$ onto a cone $C$. Then for $C\in \PP(\R^d)$ and $\theta$ uniformly distributed on $\sphere{d-1}$,
\begin{equation*}
  v_i(C) = \Prob\{\Pi_C(\theta)\in\relint F
                 \text{ for some $i$-dimensional face $F$ of $C$}\}.
\end{equation*}
The intrinsic volumes $v_i\colon\CC(\R^d)\to\R$, $i\in \{0,\ldots,d\}$, are linearly independent, continuous and orthogonally invariant valuations. In particular, one way of defining the intrinsic volumes on general cones is via
continuity and polyhedral approximation.
Some properties of the intrinsic volumes are:
\begin{enumerate}
\item $v_d(C)=\sigma(C\cap \sphere{d-1})$, where $\sigma$ denotes normalised $(d-1)$-dimensional spherical volume;
\item $v_k(L_j)=\delta_{kj}$ for a subspace $L_j\subseteq \R^d$ with $\dim L_j=j$, where $\delta_{kj}=1$ if $k=j$ and $\delta_{kj}=0$ otherwise;
\item the intrinsic volumes form a probability distribution: $v_i(C)\geq 0$ for all $i\in \{0,\dots,d\}$ and $\sum_{i=0}^d v_i(C)=1$;
\item if $\iota \colon\R^d\to\R^m$, $m\geq d$, is an isometric linear embedding, then $v_i(\iota(C))=v_i(C)$ (hence the name `intrinsic').
\end{enumerate}
We set $v_i(C)=0$ for indices outside $0,\ldots,d$ and note that $v_i(C)=0$
whenever $i>\dim C$.
For more definitions and details, see~\cite[Sections~2.3 and 4.2]{Sch22:Convex-Cones}.

Let $G(d,k)$ denote the Grassmannian of $k$-dimensional linear subspaces
of $\R^d$, equipped with its rotation-invariant Haar probability measure.
Let $L_{d-j}$ be a Haar-distributed $(d-j)$-dimensional linear subspace of
$\R^d$.  For $C\in\CC(\R^d)$ that is not a linear subspace and
$0\le j\le d-1$, define
\begin{equation}\label{eq:grassmann-angle}
  U_j(C):=\frac12\Prob\{C\cap L_{d-j}\ne\{\zero\}\}.
\end{equation}
In particular, $U_0(C)=1/2$ for every cone that is not a linear subspace.
For the following result, see for example \cite[Theorem~4.3.5]{Sch22:Convex-Cones} and, for
polyhedral cones, \cite[Corollary~5.2]{AmelunxenLotz:Intrinsic-Volumes-Cones}.

\begin{proposition}[Crofton's formula]\label{prop:crofton}
For every closed convex cone $C$ that is not a linear subspace and
$0\le j\le d-1$,
\begin{equation}\label{eq:crofton}
  U_j(C)=v_{j+1}(C)+v_{j+3}(C)+\cdots.
\end{equation}
\end{proposition}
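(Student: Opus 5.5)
The plan is to derive the identity from the conic kinematic (Crofton) formula combined with the Gauss--Bonnet relation. I would first establish it for polyhedral cones and then pass to general cones by approximation.

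\emph{Step 1 (polyhedral case, the Crofton formula proper).} For $C\in\PP(\R^d)$ and a fixed $(d-j)$-dimensional subspace $L$, the conic kinematic formula gives, for a Haar-random rotation $g$,
\[
\mathbb{E}\,v_k(C\cap gL)=v_{k+j}(C),\qquad k\ge1,
\]
where the case $k\ge1$ is the standard Crofton form; I would take this as the known ingredient from~\cite{Sch22:Convex-Cones}. The argument then uses the Gauss--Bonnet relation for a cone $D$ that is not a linear subspace:
\[
\sum_{k\ \mathrm{even}}v_k(D)=\sum_{k\ \mathrm{odd}}v_k(D)=\tfrac12 .
\]
Hence $\sum_{k\ge1,\,k\ \mathrm{odd}} v_k(D)=\tfrac12$ whenever $D\ne\{\zero\}$ is not a subspace. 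For $D=\{\zero\}$ the same sum equals $0$.

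\emph{Step 2 (combining).} Set $D=C\cap gL$. For almost every $g$ the intersection is either $\{\zero\}$ or not a linear subspace. Indeed, since $C$ is not a subspace, its lineality space has dimension $<\dim C$. A generic $(d-j)$-subspace meeting $C$ nontrivially therefore meets it in a cone whose lineality space is $C$'s lineality space intersected with $gL$. This is a proper subcone whenever $C\cap gL\ne\{\zero\}$, by general position and a dimension count. Then
\[
\tfrac12\,\mathbf 1\{C\cap gL\ne\{\zero\}\}=\sum_{k\ \mathrm{odd}}v_k(C\cap gL)\quad\text{a.s.}
\]
Taking expectations and applying Step 1 with $k$ odd gives
\[
U_j(C)=\sum_{k\ \mathrm{odd}}v_{k+j}(C)=v_{j+1}(C)+v_{j+3}(C)+\cdots .
\]

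\emph{Step 3 (general cones).} For a general closed convex cone $C$ that is not a subspace, I would approximate $C$ in the spherical Hausdorff metric by polyhedral cones $C_n$. The $C_n$ are not subspaces for large $n$, since subspaces are isolated from nonsubspaces of the same dimension class. The right-hand side converges by continuity of the $v_i$. For the left-hand side, $\mathbf 1\{C_n\cap gL\ne\{\zero\}\}\to\mathbf 1\{C\cap gL\ne\{\zero\}\}$ for every $g$ outside the event that $gL$ touches $C$ only "tangentially". I would show that this exceptional event is Haar-null, since it is contained in the set where $gL$ meets $C$ but not $\relint C$ in a suitable transversal sense. Dominated convergence then finishes the argument.

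\emph{Main obstacle.} The genericity statements are the delicate part: the almost-sure non-subspace claim in Step 2 and the null boundary event in Step 3. I would handle both by a dimension count on the Grassmannian. In Step 3 one can alternatively approximate from inside and from outside, using monotonicity of $C\mapsto U_j(C)$ together with continuity of the right-hand side. This avoids the null-set argument at the cost of needing inner and outer polyhedral approximants that are not subspaces.
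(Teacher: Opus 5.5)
Your argument is correct and is the standard derivation --- the Crofton formula $\mathbb{E}\,v_k(C\cap L)=v_{k+j}(C)$ for $k\ge1$, conic Gauss--Bonnet, and the general-position fact that $C\cap L$ is almost surely $\{\zero\}$ or not a subspace --- used in the sources the paper cites for this statement (\cite[Theorem~4.3.5]{Sch22:Convex-Cones}, \cite[Corollary~5.2]{AmelunxenLotz:Intrinsic-Volumes-Cones}); the paper itself gives no proof. In Step~3, commit to your second option: the sandwich $P_r\subseteq C\subseteq Q_r$ is airtight, because $U_j$ is monotone (immediate from its probabilistic definition), the $v_i$ are continuous, and the set of subspaces is closed in $\CC(\R^d)$, so the approximants are eventually not subspaces; for non-pointed $C$ obtain $Q_r$ by splitting off the lineality space or via polarity. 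By contrast, showing that the touching event is Haar-null needs more than a dimension count once $\bd C$ is not piecewise linear.
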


We extend $U_j$ to all cones by the right-hand side of
\eqref{eq:crofton}.  For a $k$-dimensional subspace,
\begin{align}\label{eq:grassmann-subspace}
  \begin{split}
  U_j(L_k)&=1  \ \text{if $k>j$ and $k-j$ is odd},\\
  U_j(L_k)&=0  \ \text{otherwise}.
\end{split}
\end{align}

For general cones, the following properties hold:

\begin{proposition}\label{prop:grassmann-facts}
For $0\le j\le d-1$, the functional $U_j$ is a continuous,
$\On d$-invariant valuation on $\CC(\R^d)$.  If $C\subseteq D$ and
neither cone is a linear subspace, then $U_j(C)\le U_j(D)$.
For every cone $C$ that is not a linear subspace,
\begin{equation}\label{eq:dimension-positive}
  U_j(C)=0 \ \text{if }\dim C\le j,\quad U_j(C)>0 \ \text{if }\dim C\ge j+1.
\end{equation}
Moreover, the Grassmann angles are intrinsic: if $W$ is an
$m$-dimensional subspace and $C\in\CC(W)$, then
$U_j^W(C)=U_j(C)$ for $0\le j\le m-1$, where $U_j^W$ denotes the
Grassmann angle computed in $W$.
\end{proposition}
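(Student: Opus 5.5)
The plan is to derive every assertion of Proposition~\ref{prop:grassmann-facts} from Crofton's formula (Proposition~\ref{prop:crofton}) together with the listed properties of the $v_i$.

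\emph{Continuity, invariance and valuation property.} By definition (the extension), $U_j=\sum_{i\ge0}v_{j+1+2i}$ on all of $\CC(\R^d)$. The right-hand side is a finite linear combination of the $v_i$, which are continuous, $\On d$-invariant valuations on $\CC(\R^d)$. Hence $U_j$ inherits all three properties. In particular, we never need the probabilistic definition~\eqref{eq:grassmann-angle} to check inclusion--exclusion, which is fortunate, since that definition does not directly extend to subspaces.

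\emph{Monotonicity.} For $C\subseteq D$, neither a linear subspace, use~\eqref{eq:grassmann-angle} rather than the sum. If $C\cap L\ne\{\zero\}$ then $D\cap L\ne\{\zero\}$, so the event for $C$ is contained in the event for $D$, and $U_j(C)\le U_j(D)$. Here Crofton's formula guarantees that the probabilistic definition agrees with the extension on non-subspaces.

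\emph{Vanishing and positivity.} If $\dim C\le j$, then $v_i(C)=0$ for every $i\ge j+1>\dim C$, so $U_j(C)=0$. For $\dim C=k\ge j+1$, we use the probabilistic form and restrict to $W=\lin C$. A Haar subspace $L_{d-j}$ meets $W$ in a subspace of dimension at least $k-j\ge1$, and almost surely in exactly $k-j$ dimensions. Since $C$ has nonempty interior in $W$, I would argue that with positive probability $L\cap W$ contains a point of $\inter_W C$. One way is to take a point $x\in\inter_W C$ and a small ball around it in $W$. The set of $(d-j)$-subspaces meeting this open cone nontrivially is a nonempty open subset of $G(d,d-j)$: it is nonempty because some $L$ contains $x$, and it is open because the intersection condition is stable under small perturbations once $L\cap W$ has dimension at least one transversally. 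A nonempty open subset has positive Haar measure, which gives $U_j(C)>0$. The mildly delicate point is this openness claim. I would handle it by noting that the set of $L$ containing some vector of an open subset of $W$ is open, because rotations near the identity move $L$ slightly while keeping some vector of $L$ inside the open set.

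\emph{Intrinsic property.} Let $C\in\CC(W)$ with $\dim W=m$. By property~4 of the intrinsic volumes, $v_i^W(C)=v_i(C)$ for all $i$. Hence $U_j^W(C)=\sum_i v^W_{j+1+2i}(C)=U_j(C)$ for $0\le j\le m-1$, with both sides defined through the Crofton sum; indices above $m$ vanish on both sides.

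Overall, the only step with genuine content is the positivity assertion. Everything else reduces immediately to Crofton's formula and the known properties of the $v_i$.
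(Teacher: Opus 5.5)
The paper gives no proof of this proposition and quotes it as standard background. Your derivations of the following parts are correct:
\begin{itemize}
\item the valuation property, continuity, $\On d$-invariance, vanishing for $\dim C\le j$ and intrinsicness, read off from the Crofton sum and the listed properties of the $v_i$;
\item monotonicity, from the probabilistic form.
\end{itemize}
This matches how the paper treats these facts, since the extension is defined by the Crofton sum precisely so that they are inherited from the $v_i$.

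\textbf{Where the proof fails.} The openness claim in the positivity argument is false when $k=\dim C<d$. Consider the set $\mathcal U=\{L\in G(d,d-j): L\cap\relint C\ne\varnothing\}$; in general it is not open. Take $d=3$, $j=1$, and $C$ a planar sector of angle less than $\pi$ in $W=e_3^\perp$. The plane $W$ itself lies in $\mathcal U$. Now pick a line $\ell\subset W$ with $\ell\cap C=\{0\}$, and rotate $W$ by an arbitrarily small angle about $\ell$. The resulting planes $L'$ satisfy $L'\cap W=\ell$, so $L'\notin\mathcal U$.

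Your justification breaks at the same point. A rotation $g$ near the identity sends $v\in L\cap\relint C$ to a nearby vector $gv$, but $gv$ generally leaves $W$. Since $\relint C$ is open only in $W$, not in $\R^d$, you cannot conclude $gv\in\relint C$. You gesture at transversality, but the rotation argument you actually give does not use it. For $k=d$ your argument is fine.

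\textbf{How to repair it.} The conclusion survives because $\mathcal U$ still contains a nonempty open set, and positive Haar measure follows from that. Either of the following works.
\begin{itemize}
\item Take $x\in\relint C$ and a $(k-j)$-dimensional subspace $M_0\subseteq W$ through $x$, and set $L_0=M_0\oplus W^\perp$. Then $L_0$ is transversal to $W$ with $L_0\cap W=M_0$. Transversality is an open condition, and on that open set the map $L\mapsto L\cap W\in G(W,k-j)$ is continuous. Finally, the subspaces of $W$ meeting $\relint C$ form an open subset of $G(W,k-j)$, because $\relint C$ is open in $W$. Together these give an open neighbourhood of $L_0$ inside $\mathcal U$.
\item More simply, prove the intrinsic property first; your argument for it does not use positivity. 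Then compute $U_j(C)=U_j^W(C)$ with $W=\lin C$. There $C$ is full-dimensional and not a subspace, so your rotation argument works verbatim with $\SO(W)$ acting on $G(W,k-j)$, where $k-j\ge1$.
\end{itemize}
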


We will need linear independence of the $U_j$ on pointed cones. For orthonormal vectors $e_1,\ldots,e_d$, put
$O_k=\pos(e_1,\ldots,e_k)$.  Then
\begin{equation}\label{eq:orthant-triangular}
  U_j(O_k)=0\quad(j\ge k),\qquad
  U_{k-1}(O_k)=v_k(O_k)=2^{-k}.
\end{equation}
The last equality is the solid angle of an orthant in its span.
Thus evaluation on $O_1,\ldots,O_d$ gives an invertible triangular
matrix, proving that $U_0,\ldots,U_{d-1}$ are linearly independent on
the pointed domain.

We shall also use the following standard two-sided approximation.  It is
obtained from inscribed and circumscribed Euclidean polytope approximation
\cite[Section~1.8]{Sch14:Convex-Bodies} by applying a gnomonic projection
\cite[Section~3.2]{Sch22:Convex-Cones} in an open
hemisphere.

\begin{proposition}\label{prop:polyhedral-sandwich}
For every $C\in\CC_p^*(\R^d)$ there are sequences
$P_r,Q_r\in\PP_p^*(\R^d)$ such that
\begin{equation*}
  P_r\subseteq C\subseteq Q_r,
  \qquad P_r\rightarrow C,
  \qquad Q_r\rightarrow C
\end{equation*}
in the spherical Hausdorff topology.
\end{proposition}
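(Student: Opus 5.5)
The statement to prove is Proposition~\ref{prop:polyhedral-sandwich}: two-sided polyhedral approximation of a nonzero pointed cone. The plan is to reduce to Euclidean polytope approximation inside a single affine hyperplane via the gnomonic projection.

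\textbf{Step 1: pass to an affine slice.} Since $C\in\CC_p^*(\R^d)$ is pointed and closed, there is a unit vector $u$ with $\langle u,x\rangle>0$ for all $x\in C\setminus\{\zero\}$. One can take $u$ in the interior of the dual cone, which is nonempty exactly because $C$ is pointed. Let $E=\{x:\langle u,x\rangle=1\}$ and $K=C\cap E$. By compactness of $\link C$, the quantity $\min_{x\in\link C}\langle u,x\rangle=:\delta$ is positive, so $K$ is a nonempty compact convex set with $C=\pos K$.

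\textbf{Step 2: approximate $K$ from both sides.} Apply \cite[Section~1.8]{Sch14:Convex-Bodies} inside $E$ to obtain polytopes $A_r\subseteq K\subseteq B_r$ with $d_H(A_r,K)\to0$ and $d_H(B_r,K)\to0$ in the Euclidean metric of $E$.
\begin{itemize}
\item \emph{Inner polytopes.} Take $A_r$ to be the convex hull of a finite $1/r$-net of $K$.
\item \emph{Outer polytopes.} If $\dim K<d-1$, first work in the affine hull $\operatorname{aff}K$ and then thicken slightly. More simply, let $B_r$ be a polytope contained in the $1/r$-neighbourhood of $K$ within $E$ and containing $K$; such a polytope exists by intersecting finitely many supporting half-spaces of that neighbourhood.
\end{itemize}
Since $B_r$ stays in a fixed bounded region of $E$ for large $r$, the cones $Q_r=\pos B_r$ and $P_r=\pos A_r$ are nonzero pointed polyhedral cones. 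They satisfy $P_r\subseteq C\subseteq Q_r$.

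\textbf{Step 3: transfer convergence to the sphere.} The radial map $g\colon E\to\sphere{d-1}$, $g(y)=y/\|y\|$, identifies $\link(\pos M)=g(M)$ for compact $M\subseteq E$. On a bounded region of $E$, $g$ is Lipschitz, since $\|y\|\ge1$ there. Hence
\[
d_H(g(A_r),g(K))\le L\,d_H(A_r,K)\to0 ,
\]
where $L$ is the Lipschitz constant on that region. The same bound holds for $B_r$. Thus $P_r\to C$ and $Q_r\to C$ in the spherical Hausdorff topology. Geodesic and chordal distances are comparable, so this is the topology defined above.

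\textbf{Main obstacle.} The only delicate point is the outer approximation: one must ensure $Q_r$ remains pointed and that the approximation is uniform. Both follow from restricting to the bounded neighbourhood of $K$ in $E$. Any finite set in $E$ spans a pointed cone, because $\langle u,\cdot\rangle>0$ on it.
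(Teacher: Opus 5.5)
Your proposal is correct and follows the same route as the paper. The paper combines inscribed and circumscribed Euclidean polytope approximation with the gnomonic projection; your affine slice $K=C\cap E$ together with the radial map $y\mapsto y/\|y\|$ (Lipschitz on all of $E$, since $\|y\|\ge\langle u,y\rangle=1$) is that construction made explicit.

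One phrase in the outer step is loose. An intersection of supporting half-spaces of the $1/r$-neighbourhood contains that neighbourhood rather than lying in it. Instead, intersect supporting half-spaces of $K$ itself (within $\operatorname{aff}K$, say) whose normals form a sufficiently fine net. This gives a polytope $B_r\supseteq K$ with $d_H(B_r,K)\to0$, which is all your argument uses.
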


\section{Simple polyhedral valuations}\label{sec:simple-polyhedral}

We first establish the finite identities and reflection symmetry needed
for averaging, and then state the precise integrability hypothesis.

\subsection{Finite cutting and extension}

For a simple valuation $\eta$, cutting by a linear hyperplane $H$ with
closed halfspaces $H^+,H^-$ gives
\begin{equation}\label{eq:simple-cut}
  \eta(P)=\eta(P\cap H^+)+\eta(P\cap H^-).
\end{equation}
This holds on the pointed domain as well as on all polyhedral cones,
with value zero assigned to the zero cone.

\begin{lemma}\label{lem:simple-extension}
Every simple valuation $\eta$ on $\PP_p^*(\R^d)$ has a unique simple
extension $\bar\eta$ to $\PP(\R^d)$.  If $\mathcal O$ is the family of
closed full-dimensional orthants of any orthonormal basis, then
\begin{equation}\label{eq:orthant-extension}
  \bar\eta(P)=\sum_{O\in\mathcal O}\eta(P\cap O).
\end{equation}
The extension preserves invariance under any subgroup of $\On d$.
\end{lemma}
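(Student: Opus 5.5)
Define $\bar\eta$ by the orthant formula \eqref{eq:orthant-extension} for a fixed orthonormal basis. Then check three things: it is a simple valuation extending $\eta$, it does not depend on the basis, and it is the only simple extension. Throughout, a polyhedral cone $P$ meets each closed orthant $O$ in a pointed polyhedral cone, possibly zero (value $0$) or lower dimensional (value $0$ by simplicity).

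\emph{Extension property.} Let $P$ be pointed and nonzero. The coordinate hyperplanes $e_1^\perp,\dots,e_d^\perp$ cut $P$ successively. Each cut is an instance of \eqref{eq:simple-cut} on the pointed domain. After all $d$ cuts we obtain $\eta(P)=\sum_{O}\eta(P\cap O)$, so $\bar\eta(P)=\eta(P)$.

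\emph{Uniqueness.} Any simple valuation $\tilde\eta$ on $\PP(\R^d)$ also satisfies \eqref{eq:simple-cut}, since that identity holds on all polyhedral cones. Cutting by the coordinate hyperplanes therefore gives $\tilde\eta(P)=\sum_O\tilde\eta(P\cap O)=\sum_O\eta(P\cap O)$. Hence $\tilde\eta=\bar\eta$ for the chosen basis.

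\emph{Basis independence.} Let $\mathcal O,\mathcal O'$ be the orthant families of two orthonormal bases. Each $P\cap O\cap O'$ is pointed. Apply the pointed identity above to $P\cap O$ with respect to $\mathcal O'$, and to $P\cap O'$ with respect to $\mathcal O$. Both double sums then reduce to $\sum_{O,O'}\eta(P\cap O\cap O')$, which proves the claim.

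\emph{Simplicity.} If $\dim P\le d-1$, every $P\cap O$ is lower dimensional, so $\bar\eta(P)=0$.

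\emph{Valuation property.} This is the main obstacle, because $P\cap O$ is not simply additive under unions in general position. I would use the fact that a simple valuation on pointed polyhedral cones is additive on dissections. Suppose $P_1\cup P_2=P$ is convex and polyhedral. For each $O$, the sets $P_i\cap O$ and $P_1\cap P_2\cap O$ are pointed, and $(P_1\cap O)\cup(P_2\cap O)=P\cap O$ is convex. So the valuation property of $\eta$ on the pointed domain, with the zero-cone convention, gives
\[
\eta(P\cap O)=\eta(P_1\cap O)+\eta(P_2\cap O)-\eta(P_1\cap P_2\cap O).
\]
If some pieces are zero, the identity still holds trivially. Indeed, two nonzero pointed cones with convex union have nonzero intersection, as remarked earlier. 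Summing over $O$ gives the valuation property of $\bar\eta$.

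\emph{Invariance.} Let $g\in G\subseteq\On d$. Then $\bar\eta(gP)=\sum_O\eta(gP\cap O)=\sum_{O}\eta(g(P\cap g^{-1}O))=\sum_{O}\eta(P\cap g^{-1}O)$ by invariance of $\eta$. The family $\{g^{-1}O\}$ is the orthant family of the basis $g^{-1}e_i$. By basis independence this sum equals $\bar\eta(P)$.
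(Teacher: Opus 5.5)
Your proof is correct and follows the paper's argument essentially step for step: coordinate cuts for the extension property and for uniqueness, a common refinement of two orthant families for basis independence, orthant-wise application of the pointed valuation identity (with the zero-cone convention) for the valuation property, and a change of basis for invariance. The announced appeal to additivity on dissections in the valuation step is never actually used and can be dropped, since your orthant-wise argument already suffices.
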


\begin{proof}
Every $P\cap O$ is pointed or zero.  For pointed $P$, successive
coordinate cuts in \eqref{eq:simple-cut} show that the right-hand side
of \eqref{eq:orthant-extension} equals $\eta(P)$.  Refining two orthant
families by their common hyperplane arrangement shows that the definition
is independent of the chosen orthonormal basis.
For the valuation property, let $P_1$, $P_2$ be two polyhedral cones such that $P_1\cup P_2\in \PP(\R^d)$.
Intersecting this pair
with each $O$ gives pointed cones or the zero cone, and we apply the
valuation identity on the pointed domain, with value zero assigned to the
zero cone. Moreover, any simple extension must satisfy
\eqref{eq:orthant-extension}, proving uniqueness.  Changing the basis
by an orthogonal transformation shows that invariance is preserved. Simplicity is immediate. 
\end{proof}

We also need to pass from indicator identities to valuation identities.
The following is a simple-valuation consequence of Groemer's extension
theorem; see~\cite[Theorems~1.6.2 and~1.6.5]{Sch22:Convex-Cones}.
We include a direct proof by hyperplane cutting.

\begin{lemma}\label{lem:indicator-cutting}
Let $P_1,\ldots,P_N$ be polyhedral cones and $t_1,\ldots,t_N\in\R$.
If
\[
  \sum_{i=1}^N t_i\mathbf1_{P_i}=0
\]
outside finitely many linear hyperplanes, then every simple valuation
$\eta$ on $\PP(\R^d)$ satisfies $\sum_{i=1}^N t_i\eta(P_i)=0$.
\end{lemma}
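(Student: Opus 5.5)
We will reduce to a common refinement.

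\textbf{Step 1: a common arrangement.} Let $\mathcal H$ be a finite family of linear hyperplanes containing:
\begin{itemize}
\item the finitely many exceptional hyperplanes of the hypothesis;
\item for every $i$, the linear hyperplanes spanned by the facets of $P_i$ in a facet (halfspace) representation of $P_i$;
\item for every $i$ with $\dim P_i<d$, a hyperplane containing $P_i$;
\item the coordinate hyperplanes of a fixed orthonormal basis.
\end{itemize}
The last item ensures that the closed cells of the arrangement are pointed.

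The closures of the connected components of $\R^d\setminus\bigcup\mathcal H$ are full-dimensional pointed polyhedral cones $D_1,\ldots,D_M$, the \emph{chambers}. Each $P_i$ of full dimension is the union of the chambers it contains, because $P_i$ is an intersection of halfspaces bounded by hyperplanes in $\mathcal H$. In particular, each chamber $D_m$ is either contained in $P_i$ or meets it only in a lower-dimensional set.

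\textbf{Step 2: decompose each $\eta(P_i)$.} We show that for full-dimensional $P_i$,
\[
\eta(P_i)=\sum_{D_m\subseteq P_i}\eta(D_m).
\]
To prove this, cut $P_i$ successively by the hyperplanes $H\in\mathcal H$ using \eqref{eq:simple-cut}. Each application replaces a cone $Q$ by $Q\cap H^+$ and $Q\cap H^-$. After all cuts, the surviving pieces are exactly the chambers contained in $P_i$, together with lower-dimensional cones. The lower-dimensional pieces, and the zero cone, contribute nothing because $\eta$ is simple.

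If $\dim P_i<d$, then $\eta(P_i)=0$ by simplicity. This is consistent with the formula, since no chamber is contained in $P_i$.

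\textbf{Step 3: conclude.} Pick a point $x_m\in\inter D_m$. It lies off every hyperplane in $\mathcal H$, so the hypothesis applies at $x_m$. Moreover $x_m\in P_i$ if and only if $D_m\subseteq P_i$, by Step 1. Evaluating the indicator identity at $x_m$ therefore gives
\[
\sum_{i\colon D_m\subseteq P_i}t_i=0 .
\]
Swapping the order of summation,
\[
\sum_i t_i\eta(P_i)=\sum_m \eta(D_m)\sum_{i\colon D_m\subseteq P_i}t_i=0.
\]

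\textbf{Main obstacle.} The delicate point is the cutting bookkeeping in Step 2. Iterated use of \eqref{eq:simple-cut} on $\PP(\R^d)$ is legitimate because \eqref{eq:simple-cut} holds for all polyhedral cones, with $\eta(\{\zero\})=0$. One must still check that the pieces left after all cuts are exactly the chambers inside $P_i$ plus degenerate cones.

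To see this, note that each final full-dimensional piece is an intersection of $P_i$ with one closed side of every $H\in\mathcal H$. It is therefore either a chamber or lower-dimensional. It lies in $P_i$ because it is obtained by cutting $P_i$.

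Everything else in the argument is a finite combinatorial count.
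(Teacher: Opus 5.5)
Your proof is correct and follows essentially the same route as the paper: a common central arrangement containing the exceptional hyperplanes, the facet hyperplanes and a hyperplane through each lower-dimensional $P_i$; iterated use of \eqref{eq:simple-cut} to write $\eta(P_i)$ as the sum of $\eta$ over the chambers contained in $P_i$; and evaluation of the indicator identity at one interior point per chamber. The one deviation is adding coordinate hyperplanes to make the chambers pointed, which is harmless but unnecessary here because $\eta$ is defined on all of $\PP(\R^d)$; the paper handles the pointed domain separately via Lemma~\ref{lem:simple-extension}.
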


\begin{proof}
Take a central hyperplane arrangement $H_1,\ldots,H_m$ containing the exceptional
hyperplanes, all facet hyperplanes of the full-dimensional $P_i$,
and, for each $i$ with $\dim P_i<d$, a linear hyperplane containing $P_i$.
For $\varepsilon\in\{+,-\}^m$, put
$D_\varepsilon=\bigcap_{k=1}^m H_k^{\varepsilon_k}$.
Successive application of the binary cutting identity
\eqref{eq:simple-cut}, by induction on the number of hyperplanes, gives
\[
  \eta(P_i)=\sum_{\varepsilon\in\{+,-\}^m}
    \eta(P_i\cap D_\varepsilon).
\]
At every cut, the intersection with the cutting hyperplane has value
zero by simplicity.

Let $Q_1,\ldots,Q_M$ be the closures of the full-dimensional chambers of the arrangement (that is, the 
full-dimensional $D_\varepsilon$).
For $x_j\in\inter Q_j$ set $a_{ij}=\mathbf1_{P_i}(x_j)$.
By the choice of the arrangement, each $\mathbf1_{P_i}$ is constant on
$\inter Q_j$.  If $a_{ij}=1$, then $Q_j\subseteq P_i$, and 
if $a_{ij}=0$, the intersection $P_i\cap Q_j$ is lower-dimensional.
It follows that
\[
  \eta(P_i)=\sum_{j=1}^M\eta(P_i\cap Q_j)
           =\sum_{j=1}^M a_{ij}\eta(Q_j).
\]
Each $x_j$ lies outside the exceptional hyperplanes, so the assumed
indicator identity gives $\sum_{i=1}^N t_i a_{ij}=0$ for every $j$.
Reordering the finite sums now yields
\[
  \sum_{i=1}^N t_i\eta(P_i)
  =\sum_{j=1}^M\left(\sum_{i=1}^N t_i a_{ij}\right)\eta(Q_j)
  =0.
\]
This completes the proof.
\end{proof}

In particular, a finite polyhedral subdivision with disjoint interiors
is additive for a simple valuation, including on the pointed domain
by Lemma~\ref{lem:simple-extension}.  Every nonzero pointed polyhedral
cone admits a simplicial subdivision (cf.~\cite[Section~2.6, p.~48]{Fulton93:Toric-Varieties}).
Consequently, a simple valuation on pointed polyhedral cones is determined
by its values on full-dimensional simplicial cones, so equality of two
such valuations need only be checked on this class.

\subsection{Reflection pairing}

The following incentre dissection is the conic version of a classical
construction in scissors congruence; see~\cite{Sah79:Hilberts-Third}.
We include its proof for completeness. The use of such a theorem to show that
$\SOn d$-invariant valuations are $\On d$-invariant was pointed out by Klain and Rota~\cite[Section~11.2]{KR97:Introduction-Geometric}.

\begin{lemma}\label{lem:reflection-pairing}
Every full-dimensional pointed simplicial cone $C$ has a subdivision
into $d(d-1)$ simplicial cones $B_{ij}$, $i\ne j$, with disjoint
interiors, such that a hyperplane reflection interchanges $B_{ij}$
and $B_{ji}$.
\end{lemma}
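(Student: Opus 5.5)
We use the conic version of the incentre dissection of a simplex. Let $C=\pos(x_1,\dots,x_d)$ be a full-dimensional pointed simplicial cone. Its facets are $F_1,\dots,F_d$, where $F_i$ is the facet not containing $x_i$, spanned by the hyperplane $H_i$. Let $u_i$ be the inner unit normals.

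\emph{The incentre ray.} Since $C$ is pointed and full-dimensional, the dual cone has nonempty interior. Hence the linear system $\langle u_i,z\rangle=1$, $i=1,\dots,d$, has a unique solution $z$, because the $u_i$ are linearly independent. This $z$ lies in $\inter C$ and is equidistant in angle from every facet hyperplane. The ray $\pos(z)$ is the conic incentre.

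\emph{Foot points.} For each $i$, let $p_i$ be the orthogonal projection of $z$ onto $H_i$. One checks that $p_i\in\relint F_i$. This is the conic analogue of the incircle touching each facet in its relative interior, and it is where the equal-angle property is used. In Euclidean terms it becomes transparent after gnomonic projection to a hyperplane orthogonal to $z$, though one has to be careful there, since gnomonic projection does not preserve distances.

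\emph{The pieces.} For each ordered pair $i\ne j$, the ridge $F_i\cap F_j$ is simplicial, spanned by $\{x_k:k\ne i,j\}$. Define
\[
  B_{ij}=\pos\bigl(z,\,p_i,\,\{x_k:k\ne i,j\}\bigr).
\]
This is a cone over the ridge $F_i\cap F_j$, coned with $p_i$ and $z$, so it has $d$ generators. It is simplicial because the ridge spans a $(d-2)$-space, $p_i\in H_i$ lies off that ridge span, and $z\notin H_i$.

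\emph{Why the $B_{ij}$ subdivide $C$.} First cone $C$ from the interior ray $z$ over its facets, giving $C=\bigcup_i\pos(z,F_i)$. Then subdivide each facet $F_i$ by coning from $p_i\in\relint F_i$ over its boundary ridges, giving $F_i=\bigcup_{j\ne i}\pos(p_i,F_i\cap F_j)$. Both steps produce pieces with disjoint interiors. This gives $d(d-1)$ simplicial cones $B_{ij}$ with disjoint interiors whose union is $C$.

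\emph{The reflection.} Let $R_{ij}$ be the reflection in the hyperplane $M_{ij}$ spanned by the ridge $F_i\cap F_j$ and $z$. We claim $M_{ij}$ is the bisector of the dihedral angle between $H_i$ and $H_j$. Indeed, $M_{ij}$ contains $H_i\cap H_j$, and $z$ satisfies $\langle u_i-u_j,z\rangle=0$, so $M_{ij}=(u_i-u_j)^\perp$. The reflection in $(u_i-u_j)^\perp$ swaps $u_i$ and $u_j$, because $|u_i|=|u_j|$. It therefore maps $H_i$ to $H_j$, fixes $z$ and fixes the ridge pointwise. Since it commutes with projection from $z$, it maps $p_i$ to $p_j$. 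Consequently $R_{ij}B_{ij}=B_{ji}$.

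\emph{Expected main obstacle.} The delicate point is verifying that $p_i$ lies in the relative interior of $F_i$. Without this, the facet subdivision could produce degenerate or overlapping pieces. My first attempt is to show directly that $\langle u_k,p_i\rangle>0$ for $k\ne i$. Write $p_i=z-\lambda u_i$ with $\lambda=\langle u_i,z\rangle/|u_i|^2=1$, since the $u_i$ are unit vectors. Then
\[
  \langle u_k,p_i\rangle=1-\langle u_k,u_i\rangle,
\]
which is positive because $u_k\ne u_i$ are distinct unit vectors. This gives the claim and completes the construction.
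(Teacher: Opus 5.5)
Your proposal is correct and is essentially the paper's proof: your $z$ with $\langle u_i,z\rangle=1$ is the paper's $z=\sum_k\|y_k\|p_k$, and your foot points $p_i=z-u_i$ are the paper's $z_i=z-\nu_i$; both arguments verify $p_i\in\relint F_i$ via $1-\langle u_k,u_i\rangle>0$ and use the reflection in $(u_i-u_j)^\perp$. Two minor points: unique solvability of $\langle u_i,z\rangle=1$ comes from the linear independence of the $u_i$, not from the dual cone having nonempty interior, and $R_{ij}p_i=p_j$ is most cleanly seen from $R_{ij}(z-u_i)=z-u_j$.
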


\begin{proof}
Write $C=\pos(p_1,\ldots,p_d)$ for a basis $p_1,\ldots,p_d$, let
$y_1,\ldots,y_d$ be its dual basis, and set
\[
  F_i=\pos(p_k:k\ne i),\qquad
  \nu_i=\frac{y_i}{\|y_i\|},\qquad
  z=\sum_{k=1}^d\|y_k\|p_k,\qquad z_i=z-\nu_i.
\]
Here $\nu_i$ is the inward unit normal to $F_i$,
$z\in\inter C$, and $\langle\nu_i,z\rangle=1$.
Furthermore,
\[
  \langle y_k,z_i\rangle
  =\|y_k\|\bigl(1-\langle\nu_k,\nu_i\rangle\bigr).
\]
This is zero for $k=i$ and strictly positive otherwise, since distinct
dual basis vectors have distinct unit directions.  Hence
$z_i\in\relint F_i$.
Star subdivision at $z$, followed by star subdivision of each $F_i$
at $z_i$, gives a subdivision into the cones
\[
  B_{ij}=\pos\bigl(z,z_i,p_k:k\notin\{i,j\}\bigr),\qquad i\ne j.
\]
For $d=2$ the second subdivision is the identity on each ray.
Reflection $s_{ij}$ in $(\nu_i-\nu_j)^\perp$ interchanges $\nu_i$
and $\nu_j$, fixes $z$, and fixes every $p_k$ with $k\notin\{i,j\}$.
Thus $s_{ij}z_i=z_j$ and $s_{ij}B_{ij}=B_{ji}$.
Figure~\ref{fig:incenter-dissection} illustrates the construction for $d=3$.
\end{proof}

\begin{figure}[htbp]
  \centering
  \includegraphics[width=0.60\linewidth]{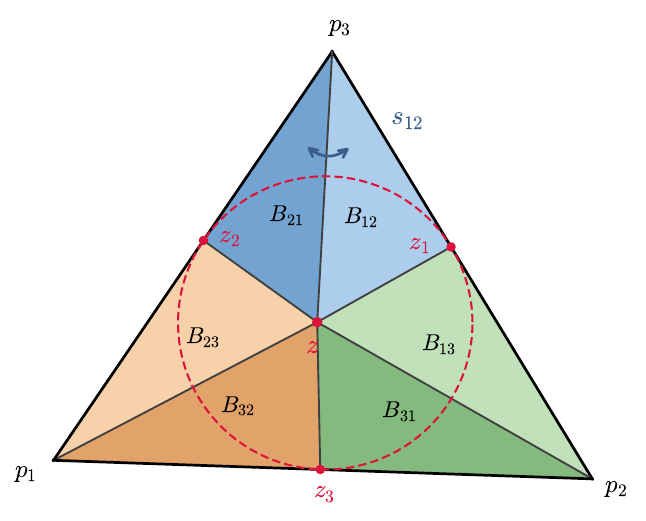}
  \caption{An affine section of the dissection in
    Lemma~\ref{lem:reflection-pairing} for $d=3$.}
  \label{fig:incenter-dissection}
\end{figure}

\begin{proposition}\label{prop:polyhedral-reflections}
Every simple $\SOn d$-invariant valuation on $\PP_p^*(\R^d)$ is
$\On d$-invariant.
\end{proposition}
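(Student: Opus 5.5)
The plan is to show $\eta(sP)=\eta(P)$ for a single hyperplane reflection $s$ and every $P\in\PP_p^*(\R^d)$. Every element of $\On d\setminus\SOn d$ has the form $gs$ with $g\in\SOn d$ and $s$ a fixed reflection, so this suffices. Moreover, any two hyperplane reflections differ by a rotation: if $s,s'$ are reflections, then $s's\in\SOn d$, and $s'=(s's)s$. Hence it suffices to treat an arbitrary reflection, and we may choose it conveniently for each cone. By the remark after Lemma~\ref{lem:indicator-cutting}, $\eta$ and $\eta\circ s$ are both simple valuations on $\PP_p^*(\R^d)$. Equality therefore only needs to be checked on full-dimensional pointed simplicial cones $C$, since every nonzero pointed polyhedral cone admits a simplicial subdivision and simple valuations are additive over such subdivisions.

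Fix such a $C$ and apply Lemma~\ref{lem:reflection-pairing}. This gives
\[
  C=\bigcup_{i\ne j}B_{ij}
\]
with disjoint interiors, together with hyperplane reflections $s_{ij}$ satisfying $s_{ij}B_{ij}=B_{ji}$. Now take any reflection $s$. Since $s s_{ij}\in\SOn d$ and $s s_{ij}B_{ij}=sB_{ji}$, invariance under $\SOn d$ gives
\[
  \eta(sB_{ji})=\eta(B_{ij}).
\]
Each $B_{ij}$ is pointed, being contained in $C$, so all the values are defined.

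By additivity over the subdivision, which also holds for the image subdivision of $sC$ into the cones $sB_{ij}$,
\[
  \eta(sC)=\sum_{i\ne j}\eta(sB_{ji})=\sum_{i\ne j}\eta(B_{ij})=\eta(C).
\]
Here we reindexed over the ordered pairs $(j,i)$, which is a bijection of $\{(i,j):i\ne j\}$. This gives $\eta\circ s=\eta$ on simplicial cones, hence on all of $\PP_p^*(\R^d)$, and so $\eta$ is $\On d$-invariant.

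The only delicate point is the additivity of $\eta$ over a subdivision with disjoint interiors on the pointed domain. This is supplied by Lemma~\ref{lem:simple-extension} combined with Lemma~\ref{lem:indicator-cutting}: the indicator identity
\[
  \mathbf 1_C=\sum_{i\ne j}\mathbf 1_{B_{ij}}
\]
holds outside finitely many hyperplanes, namely the spans of the walls of the pieces. Once that is in place, the argument is purely combinatorial.
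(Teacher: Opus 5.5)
Your proof is correct and is essentially the paper's argument: the incentre dissection of Lemma~\ref{lem:reflection-pairing}, the observation that $ss_{ij}$ is a rotation carrying $B_{ij}$ to $sB_{ji}$, reindexing over ordered pairs, and extension from simplicial cones by subdivision. The preliminary reduction to a single reflection, and the unused remark about choosing it per cone, are harmless but unnecessary: the same computation works verbatim for any orientation-reversing $g$, which is how the paper phrases it.
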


\begin{proof}
Let $g$ reverse orientation and let $C$ be full-dimensional and
simplicial.  Lemma~\ref{lem:reflection-pairing} gives
\[
  \eta(gC)=\sum_{i\ne j}\eta(gB_{ij})
          =\sum_{i\ne j}\eta((gs_{ij})B_{ji})
          =\sum_{i\ne j}\eta(B_{ji})=\eta(C).
\]
The middle equality uses $\det(gs_{ij})=1$.
Subdivision proves the result for all pointed polyhedral cones.
\end{proof}

\subsection{Signed stellar subdivision}

A key ingredient in our proof is a signed simplex identity underlying Wang and Wu's cocycle argument
\cite[Lemma~6.2]{WangWu:Spherical-Hadwiger}. We give a short proof based on
the interval of nonnegative representations of a vector.
Here and throughout we use the usual convention that an entry marked with a hat is omitted.

\begin{lemma}\label{lem:circuit}
Let $q_0,\ldots,q_d\in\R^d$ be such that every $d$ of them are linearly
independent. Set
\[
  \alpha_i=(-1)^i\det(q_0,\ldots,\widehat q_i,\ldots,q_d),
  \qquad C_i=\pos(q_0,\ldots,\widehat q_i,\ldots,q_d).
\]
Then outside the spans of
all $(d-1)$-element subsets of the $q_i$,
\begin{equation}\label{eq:circuit}
  \sum_{i=0}^d\sgn(\alpha_i)\mathbf1_{C_i}=
  \begin{cases}
    1,&\alpha_i>0\text{ for all }i,\\
    -1,&\alpha_i<0\text{ for all }i,\\
    0,&\text{otherwise}.
  \end{cases}
\end{equation}
\end{lemma}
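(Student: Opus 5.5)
The plan is to use the unique linear dependency among the $q_i$ together with the interval of nonnegative representations of a point. By Cramer's rule, $\sum_{i=0}^d\alpha_iq_i=0$; this is the generalised Laplace expansion of the $(d+1)\times(d+1)$ determinant with a repeated row. Since every $d$ of the $q_i$ are independent, each $\alpha_i\neq0$, and the dependency space of $(q_0,\ldots,q_d)$ is exactly $\R\alpha$. Fix $x$ outside the spans of all $(d-1)$-subsets. The representations $x=\sum_i\lambda_iq_i$ form an affine line
\[
  \lambda(t)=\lambda^0+t\alpha,\qquad t\in\R .
\]
Moreover, $x\in C_i$ if and only if some representation has $\lambda_i=0$ and all other $\lambda_k\ge0$. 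Such a representation is unique, since $\alpha_i\ne0$ pins down $t=t_i:=-\lambda^0_i/\alpha_i$.

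The genericity of $x$ gives the key simplification: the values $t_0,\ldots,t_d$ are pairwise distinct. If $t_i=t_j$ with $i\ne j$, then $x$ would be a combination of $d-1$ of the $q_k$, which is excluded. So the line meets each coordinate hyperplane $\{\lambda_i=0\}$ at a distinct point, and the set $I=\{t:\lambda(t)\ge0\}$ is a closed interval (possibly empty or unbounded) in $t$. The condition $x\in C_i$ says exactly that $t_i$ is an endpoint of $I$ at which the $i$-th coordinate becomes active. On the left end of $I$, the active coordinate is one with $\alpha_i>0$, since $\lambda_i$ increases in $t$. On the right end, it is one with $\alpha_i<0$.

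Now count cases.
\begin{itemize}
\item If $I$ is a nondegenerate bounded interval, it has one left endpoint with some $\alpha_i>0$ and one right endpoint with some $\alpha_j<0$. The sum is then $1-1=0$.
\item If $I$ is unbounded to the right, then $\lambda(t)\ge0$ for all large $t$, forcing all $\alpha_i\ge0$, hence all $\alpha_i>0$. The only endpoint is a left one, giving $+1$.
\item If $I$ is unbounded to the left, the symmetric argument gives all $\alpha_i<0$ and the value $-1$.
\item If $I=\R$, every $\alpha_i=0$, which is impossible.
\item If $I$ is empty or a single point: a single point would make two coordinates vanish simultaneously, contradicting genericity, so $I$ is empty and the sum is $0$.
\end{itemize}

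Conversely, we need the sign conditions to determine the case. If all $\alpha_i>0$, then for large $t$ all $\lambda_i(t)>0$, so $I$ is unbounded to the right but bounded to the left (for very negative $t$ all coordinates are negative), and we are in the $+1$ case. Similarly, all $\alpha_i<0$ gives $-1$. With mixed signs, $I$ is bounded on both sides, so the value is $0$. This matches \eqref{eq:circuit}.

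The main care point is checking that "exactly one active coordinate at each endpoint" and "distinct $t_i$" really follow from excluding the $(d-1)$-spans. Two vanishing coordinates would put $x$ in the span of $d-1$ of the $q_k$. A further subtlety is that the points of $C_i$ lying on its boundary are also excluded, which ensures that each $x\in C_i$ is counted once, with the sign of $\alpha_i$.
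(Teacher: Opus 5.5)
Your proof is correct and follows essentially the same route as the paper: both parametrise the nonnegative representations of $x$ by the interval $\{t:\lambda^0+t\alpha\ge0\}$ along the kernel direction $\alpha$, identify $x\in C_i$ with $t_i$ being a finite endpoint of that interval, match lower endpoints with $\alpha_i>0$ and upper endpoints with $\alpha_i<0$, and rule out a singleton interval by genericity. Your observation that the $t_i$ are pairwise distinct is the paper's ``at most one vanishing coordinate'' step, and it already does all the work, so your closing remark about boundaries of the $C_i$ is unnecessary.
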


\begin{proof}
Let $Q=[q_0\ \cdots\ q_d]$ and $\alpha=(\alpha_0,\ldots,\alpha_d)^T$.
Laplace expansion gives $Q\alpha=0$.  Since $Q$ has rank $d$ and
every $\alpha_i$ is nonzero, $\ker Q=\R\alpha$.
Fix $y$ outside the exceptional spans and choose $a\in\R^{d+1}$ with
$Qa=y$.  All nonnegative representations of $y$ are therefore
parametrised by the closed interval
\[
  I_y=\{t\in\R:a_i+t\alpha_i\ge0\text{ for every }i\},
\]
which may be empty or unbounded.  At any $t\in I_y$, at most one
coordinate of $a+t\alpha$ vanishes, since two vanishing coordinates
would put $y$ in the span of $d-1$ generators.  In particular, $I_y$
cannot be a singleton: this would require both a lower and an upper
bound to be attained, hence two vanishing coordinates.

By definition,
\[
  y\in C_i\quad\Longleftrightarrow\quad
  a_i+t\alpha_i=0\text{ for some }t\in I_y.
\]
Such a $t$ is a finite endpoint of $I_y$, and every finite endpoint
has exactly one vanishing coordinate.  A lower endpoint corresponds
to an index $i$ with $\alpha_i>0$, and an upper endpoint to an index
$j$ with $\alpha_j<0$.

If all $\alpha_i>0$, then $I_y$ is a half-line with one lower endpoint,
so the sum in~\eqref{eq:circuit} is $1$.  If all $\alpha_i<0$, it is
a half-line with one upper endpoint, giving $-1$.  If the signs are
mixed, $I_y$ is either empty or a nondegenerate bounded interval;
in the latter case its two endpoints contribute opposite signs.
Thus the sum is zero, proving~\eqref{eq:circuit}.
\end{proof}

\begin{remark}\label{rem:circuit-coverage}
The proof does not require the $d+1$ points to be affinely independent.
If they are, put $T=\operatorname{conv}(q_0,\ldots,q_d)$.
All $\alpha_i$ have the same sign if and only if $0\in\inter T$.
Consider rays from the origin whose directions avoid the exceptional
spans in Lemma~\ref{lem:circuit}.  If $0\in\inter T$, each such ray
exits $T$ through the relative interior of one facet.  If $0\notin T$,
each such ray either misses $T$ or enters and exits through the relative
interiors of two distinct facets, whose contributions have opposite signs.

In the language of oriented matroids, the sign
vector $(\sgn(\alpha_i))_{i=0}^d$ is a signed circuit of the vector
configuration represented by $Q$, and replacing selected generators
by their negatives is a reorientation; see~\cite[Section~1.2]{BLSWZ99:Oriented-Matroids}.
\end{remark}

\begin{proposition}\label{prop:stellar}
Let $\eta$ be simple on $\PP_p^*(\R^d)$, with extension $\bar\eta$ from
Lemma~\ref{lem:simple-extension}.  For a basis $p_1,\ldots,p_d$, put
\[
  C=\pos(p_1,\ldots,p_d),\qquad F_j=\pos(p_i:i\ne j),\qquad
  x=\sum_{j=1}^d a_j(x)p_j.
\]
If all $a_j(x)\ne0$, then
\begin{equation}\label{eq:stellar}
  \eta(C)=\mathbf1_{-C}(x)\bar\eta(\R^d)
       +\sum_{j=1}^d\sgn(a_j(x))\eta(F_j+\R_{\ge0}x).
\end{equation}
\end{proposition}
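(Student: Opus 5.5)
The plan is to apply Lemma~\ref{lem:circuit} to the $d+1$ vectors
\[
  q_0=x,\qquad q_j=p_j\quad(1\le j\le d),
\]
convert the resulting indicator identity into a valuation identity with Lemma~\ref{lem:indicator-cutting} applied to $\bar\eta$, and then restrict back to $\eta$ on the pointed cones.

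First I check the hypotheses and identify the cones. Any $d$ of the $q_i$ that omit $q_0$ form the basis $p_1,\ldots,p_d$. Any $d$ that omit $q_j$ with $j\ge1$ are $x$ together with $p_i$, $i\ne j$; these are linearly independent because $a_j(x)\ne0$ puts $x$ outside $\lin F_j$. In the notation of the lemma, $C_0=C$ and $C_j=\pos(x,p_i:i\ne j)=F_j+\R_{\ge0}x$. Each $C_j$ is generated by $d$ linearly independent vectors, so it is a full-dimensional pointed simplicial cone. In particular $\bar\eta(C_j)=\eta(C_j)$ and $\bar\eta(C)=\eta(C)$.

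Next I compute the signs. Put $D=\det(p_1,\ldots,p_d)$, so $\alpha_0=D$. Expanding $x=\sum_j a_j(x)p_j$ in the first column gives $\det(x,p_1,\ldots,\widehat p_j,\ldots,p_d)=(-1)^{j-1}a_j(x)D$. Hence
\[
  \alpha_j=(-1)^j(-1)^{j-1}a_j(x)D=-a_j(x)D .
\]
Replacing $p_1$ by a relabelling changes neither side of \eqref{eq:stellar}, so I may assume $D>0$. Alternatively, if $D<0$ every sign flips, and the argument below is unchanged after multiplying through by $-1$.

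With $D>0$ we have $\sgn\alpha_0=1$ and $\sgn\alpha_j=-\sgn a_j(x)$. Since $\alpha_0>0$, the case in which all $\alpha_i<0$ cannot occur. All $\alpha_i>0$ holds exactly when every $a_j(x)<0$, that is, when $x\in-C$; given that all $a_j(x)\ne0$, this is the same as $x\in-\inter C$. Lemma~\ref{lem:circuit} therefore yields, outside finitely many linear hyperplanes,
\[
  \mathbf1_C-\sum_{j=1}^d\sgn(a_j(x))\mathbf1_{C_j}=\mathbf1_{-C}(x)\,\mathbf1_{\R^d}.
\]

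Finally, I apply Lemma~\ref{lem:indicator-cutting} to the simple valuation $\bar\eta$ on $\PP(\R^d)$ from Lemma~\ref{lem:simple-extension}. I use the cones $C,C_1,\ldots,C_d,\R^d$, with coefficients $1$, $-\sgn a_j(x)$ and $-\mathbf1_{-C}(x)$ respectively. This gives
\[
  \bar\eta(C)-\sum_{j=1}^d\sgn(a_j(x))\,\bar\eta(C_j)-\mathbf1_{-C}(x)\,\bar\eta(\R^d)=0 .
\]
Replacing $\bar\eta$ by $\eta$ on the pointed cones $C$ and $C_j$ gives \eqref{eq:stellar}.

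The only delicate point is the sign bookkeeping for the determinants $\alpha_j$ and the orientation normalisation. Once that is done, everything else is a direct invocation of the two lemmas.
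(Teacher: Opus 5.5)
Your proposal is correct and follows essentially the same route as the paper: apply Lemma~\ref{lem:circuit} to $(x,p_1,\ldots,p_d)$, identify the cofactors as $\alpha=D\,(1,-a_1(x),\ldots,-a_d(x))^T$, and transfer the resulting indicator identity to $\bar\eta$ via Lemma~\ref{lem:indicator-cutting}. The differences are cosmetic. Where the paper divides by $\sgn(D)$, you normalise $D>0$ or multiply through by $-1$. You also explicitly check the general-position hypothesis and that each $C_j$ is full-dimensional, pointed and simplicial, which the paper leaves implicit.
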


\begin{proof}
Apply Lemma~\ref{lem:circuit} to $(x,p_1,\ldots,p_d)$.
Writing $D=\det(p_1,\ldots,p_d)$, its cofactor vector is
$\alpha=D\cdot (1,-a_1(x),\ldots,-a_d(x))^T$.
The cofactor signs are all equal precisely when all $a_j(x)<0$,
or equivalently $x\in -C$.  Dividing the identity in
Lemma~\ref{lem:circuit} by $\sgn(D)$ and rearranging gives
\begin{equation*}
 \mathbf1_{C} = \mathbf1_{-C}(x)\mathbf1_{\R^d}+\sum_{j=1}^d\sgn(a_j(x))\mathbf1_{(F_j+\R_{\ge0}x)}.
\end{equation*}
This holds outside the spans of all $(d-1)$-element subsets of
$(x,p_1,\ldots,p_d)$, hence outside finitely many linear hyperplanes.
Lemma~\ref{lem:indicator-cutting} applied to $\bar\eta$
gives \eqref{eq:stellar}.
\end{proof}

\subsection{Averaging the apex}

The following theorem gives an affirmative answer to Schneider's
Problem~74 in Gruber and Schneider~\cite[Problem~74, p.~272]{Gruber1979},
with continuity replaced by integrability of the fixed-facet apex
functions.  In the continuous case, these functions are continuous
away from the exceptional great hypersphere and bounded.  Indeed,
cutting the simplicial cones into coordinate orthants gives pieces
contained in compact families of pointed polyhedral cones with
uniformly bounded numbers of extreme rays. Continuity and simplicity
then give a uniform bound.

\begin{theorem}\label{thm:simple-integrable}
Let $\eta$ be a simple $\SOn d$-invariant valuation on
$\PP_p^*(\R^d)$.  Suppose that for every pointed simplicial cone $F$
of dimension $d-1$, the function
\[
  x\longmapsto\eta(F+\R_{\ge0}x)
\]
is integrable on $\sphere{d-1}\setminus\lin F$.
Then
\[
  \eta(P)=\bar\eta(\R^d)v_d(P)
  \qquad\text{for all }P\in\PP_p^*(\R^d).
\]
\end{theorem}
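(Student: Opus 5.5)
The plan is to average the stellar identity \eqref{eq:stellar} of Proposition~\ref{prop:stellar} over a uniformly distributed apex $x\in\sphere{d-1}$. Reflection pairing then makes the facet terms cancel, so only the term $\mathbf1_{-C}(x)\bar\eta(\R^d)$ survives, and its average is $\bar\eta(\R^d)\sigma(-C\cap\sphere{d-1})=\bar\eta(\R^d)v_d(C)$. Since simple valuations on $\PP_p^*(\R^d)$ are determined by their values on full-dimensional simplicial cones (the subdivision remark after Lemma~\ref{lem:indicator-cutting}), and $v_d$ is simple, it suffices to prove the identity for $C=\pos(p_1,\ldots,p_d)$ full-dimensional and simplicial.

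\textbf{Step 1: integrate the identity.} Fix such a $C$. For $\sigma$-almost every $x$ all coordinates $a_j(x)$ are nonzero, since the exceptional set is a finite union of great hyperspheres. For those $x$, \eqref{eq:stellar} holds. Each function $x\mapsto\eta(F_j+\R_{\ge0}x)$ is integrable by hypothesis, because $F_j$ is a pointed simplicial cone of dimension $d-1$. Also $\sgn(a_j(x))$ is determined by which side of $\lin F_j$ the point $x$ lies on. Integrating against $\sigma$ gives
\[
  \eta(C)=\bar\eta(\R^d)\,v_d(C)+\sum_{j=1}^d\int_{\sphere{d-1}}\sgn(a_j(x))\,\eta(F_j+\R_{\ge0}x)\,d\sigma(x).
\]
Here $v_d(-C)=v_d(C)$ is used.

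\textbf{Step 2: the facet integrals vanish.} Fix $j$, and let $s$ be the orthogonal reflection in the hyperplane $H=\lin F_j$. Then $s$ fixes $F_j$ pointwise, so $s(F_j+\R_{\ge0}x)=F_j+\R_{\ge0}sx$. By Proposition~\ref{prop:polyhedral-reflections}, $\eta$ is $\On d$-invariant, so $\eta(F_j+\R_{\ge0}sx)=\eta(F_j+\R_{\ge0}x)$. On the other hand, $s$ swaps the two open half-spheres of $H$, so $\sgn(a_j(sx))=-\sgn(a_j(x))$. Because $s$ preserves $\sigma$, substituting $x\mapsto sx$ shows that the $j$th integral equals its own negative, hence is zero. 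Integrability is exactly what justifies this change of variables and the splitting of the integral into finitely many terms. This yields $\eta(C)=\bar\eta(\R^d)v_d(C)$.

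\textbf{Expected obstacle.} The main subtlety is the measure-theoretic bookkeeping rather than any geometry. The cones $F_j+\R_{\ge0}x$ are pointed for $x\notin\lin F_j$, so the integrand is defined off a null set. The constant $\bar\eta(\R^d)$ is finite by Lemma~\ref{lem:simple-extension}. One must also check that the term $\mathbf1_{-C}$ contributes exactly $\sigma(-C\cap\sphere{d-1})$, which holds because boundaries have measure zero. I would also double-check that invariance under $\On d$, and not merely $\SOn d$, is genuinely needed in Step 2. A rotation fixing $F_j$ cannot map one side of $H$ to the other unless $d\ge 3$ allows a rotation within $H^\perp\oplus(\text{line in }H)$, and such a rotation moves $F_j$. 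So Proposition~\ref{prop:polyhedral-reflections} is the key input that makes the cancellation work.
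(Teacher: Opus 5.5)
Your proposal is correct and follows the paper's proof essentially step for step. You integrate the stellar identity \eqref{eq:stellar} over the apex, cancel each facet term by the reflection in $\lin F_j$ using the integrability hypothesis and the $\On d$-invariance from Proposition~\ref{prop:polyhedral-reflections}, read off $\bar\eta(\R^d)\sigma(-C\cap\sphere{d-1})=\bar\eta(\R^d)v_d(C)$, and finish by subdivision.

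Your side remark about needing $\On d$ is also right in substance, though the argument can be put more simply. An orthogonal map fixing $F_j$ pointwise fixes $\lin F_j$ pointwise, so it is either the identity or that reflection. This is why orientation-reversing invariance is genuinely needed for the cancellation.
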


\begin{proof}
By Proposition~\ref{prop:polyhedral-reflections}, $\eta$ is orthogonally
invariant.  Fix a full-dimensional simplicial cone $C$ and use the
notation of Proposition~\ref{prop:stellar}.
Write $x=\sum_{j=1}^d a_j(x)p_j$, where the $a_j$ are the coordinate
functionals of the chosen basis $p_1,\ldots,p_d$.
Let $r_j$ be reflection in $H_j=\lin F_j$.
It fixes $F_j$ pointwise, and the linear functional $a_j$ vanishes
on $H_j$, so
\[
  a_j(r_jx)=-a_j(x),\qquad
  F_j+\R_{\ge0}r_jx=r_j(F_j+\R_{\ge0}x).
\]
Thus the $j$th signed summand in \eqref{eq:stellar} is odd under
$r_j$ and, by the integrability hypothesis, has integral zero.
The exceptional apices in \eqref{eq:stellar} lie in the finitely many
great hyperspheres $H_j\cap\sphere{d-1}$.  Integrating gives
\[
  \eta(C)=\bar\eta(\R^d)\sigma(-C\cap\sphere{d-1})
         =\bar\eta(\R^d)v_d(C).
\]
Simplicity and subdivision complete the proof.
\end{proof}

The theorem uses only integrability of each fixed-facet apex function,
not joint measurability in all generators.  It determines polyhedral
values only.  The passage to general cones will use monotonicity of
the original valuation, not continuity or monotonicity of its simple
remainder.

\section{Monotone valuations on pointed cones}\label{sec:monotone-cone}

We now establish the boundedness and measurability needed for
Theorem~\ref{thm:simple-integrable}, and then prove the representation
and automatic continuity together.

\begin{lemma}\label{lem:normalization}
Let $\mu$ be a monotone $\SOn d$-invariant valuation on
$\CC_p^*(\R^d)$, and let $\alpha$ be its common value on rays.
Then $\mu_0:=\mu-2\alpha U_0$ is monotone, invariant, nonnegative,
bounded, and vanishes on rays.
\end{lemma}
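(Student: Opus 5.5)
First, since $\SOn d$ acts transitively on rays, $\mu$ takes a common value $\alpha$ on all rays. By Proposition~\ref{prop:grassmann-facts}, $U_0$ is an $\On d$-invariant valuation with $U_0(C)=1/2$ for every nonzero pointed cone; pointed cones are never linear subspaces. Hence $2\alpha U_0$ is the constant $\alpha$ on $\CC_p^*(\R^d)$. On this domain it is a valuation: if $C_1,C_2$ are nonzero pointed cones with $C_1\cup C_2$ convex and pointed, then $C_1\cap C_2\neq\{\zero\}$ by the convention in the preliminaries, and $\alpha=\alpha+\alpha-\alpha$. So $\mu_0=\mu-\alpha$ is a valuation, it is $\SOn d$-invariant, and it vanishes on rays. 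Subtracting a constant preserves monotonicity, so $\mu_0$ is monotone.

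For nonnegativity, every $C\in\CC_p^*(\R^d)$ contains a ray $R$. Monotonicity then gives $\mu_0(C)\ge\mu_0(R)=0$.

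Boundedness is the only real step. The idea is to dominate every pointed cone by a fixed cone. First check that any nonzero pointed closed cone $C$ is contained in a rotated copy of some fixed pointed cone. Since $C$ is pointed and closed, there is a unit vector $u$ with $\langle u,x\rangle>0$ for all $x\in C\setminus\{\zero\}$. However, the angular width of $C$ around $u$ can approach $\pi/2$, so no single circular cone contains all of them. Instead I would cover $C$ by finitely many pieces. Cut $C$ by the coordinate hyperplanes of an orthonormal basis, so that each piece $C\cap O$ lies in an orthant $O$. For each piece, monotonicity gives $\mu_0(C\cap O)\le\mu_0(O)$, and $\mu_0(O)=\mu_0(O_d)=:M$ by invariance. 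This needs care because $\mu_0$ is not simple, so cutting is not additive.

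Instead, I would use the valuation property with inclusion--exclusion over successive hyperplane cuts. For a pointed cone $C$ and a hyperplane $H$, the pieces $C\cap H^\pm$ are convex, and their union $C$ is convex. If both pieces are nonzero, then
\[
  \mu_0(C)=\mu_0(C\cap H^+)+\mu_0(C\cap H^-)-\mu_0(C\cap H).
\]
If one piece is zero, the identity is trivial. The last term is nonnegative or corresponds to the zero cone, so in either case $\mu_0(C)\le\mu_0(C\cap H^+)+\mu_0(C\cap H^-)$. Iterating over the $d$ coordinate hyperplanes gives
\[
  \mu_0(C)\le\sum_{O}\mu_0(C\cap O)\le 2^dM,
\]
where the sum runs over the nonzero pieces. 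The terms are bounded via monotonicity, $\mu_0(C\cap O)\le\mu_0(O)=M$, using $\SOn d$-transitivity on orthants of a fixed basis, which are all congruent by rotations. Together with nonnegativity this gives $0\le\mu_0\le 2^dM$.

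The main point needing attention is that, in the cutting inequality, the dropped term $\mu_0(C\cap H)$ is either a genuine nonzero pointed cone, with value $\ge0$, or the zero cone, in which case the valuation identity is not needed at all.
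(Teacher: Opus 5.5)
Your proposal is correct and follows essentially the same route as the paper: subtract the constant $2\alpha U_0=\alpha$, get nonnegativity from a contained ray, and bound $\mu_0(C)$ by successive coordinate-hyperplane cuts, dropping the nonnegative term $\mu_0(C\cap H)$ and then applying monotonicity $\mu_0(C\cap O)\le\mu_0(O)$. Your additional checks are sound: $C\cap H\ne\{\zero\}$ whenever both halves are nonzero, and all full-dimensional orthants are $\SOn d$-congruent. The paper leaves the first implicit and avoids the second by bounding with $\sum_O\mu_0(O)$ directly.
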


\begin{proof}
Since $2\alpha U_0=\alpha$ on nonzero pointed cones, subtracting it
preserves monotonicity.  Every nonzero cone contains a ray, so
$\mu_0\ge0$.  Set $\mu_0(\{\zero\})=0$.
For a linear hyperplane $H$ with closed halfspaces $H^+,H^-$,
\[
  \mu_0(C)
  =\mu_0(C\cap H^+)+\mu_0(C\cap H^-)-\mu_0(C\cap H)
  \le\mu_0(C\cap H^+)+\mu_0(C\cap H^-).
\]
Successive cuts by the coordinate hyperplanes give, for their full
orthant family $\mathcal O$,
\[
  0\le\mu_0(C)
  \le\sum_{O\in\mathcal O}\mu_0(C\cap O)
  \le\sum_{O\in\mathcal O}\mu_0(O).
\]
The right-hand side is a finite constant independent of $C$.
\end{proof}

Let $H\subset\R^d$ be a linear hyperplane.  For
$C\in\CC_p^*(H)$ and $x\in\sphere{d-1}\setminus H$, define
\begin{equation}\label{eq:coning-operation}
  x*C:=C+\R_{\ge0}x.
\end{equation}
The linear isomorphism $H\times\R\to\R^d$,
$(q,t)\mapsto q+tx$, identifies $x*C$ with $C\times[0,\infty)$,
so $x*C$ is closed and pointed.
For fixed $C$, it depends continuously on $x\notin H$:
the corresponding linear isomorphisms converge, and their induced
maps on the sphere converge uniformly.

\begin{lemma}\label{lem:measure}
Let $m\ge1$, let $C\in\CC_p^*(\R^m)$ be full-dimensional, and let
$A\subseteq\R^m$ satisfy $A+\inter C\subseteq A$.
Then $A$ is Lebesgue measurable.
\end{lemma}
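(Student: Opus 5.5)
We will show that the boundary of $A$ has Lebesgue measure zero. Then $A$ differs from the Borel set $\inter A$ only by a subset of $\bd A$, a null set, and $A$ is measurable because Lebesgue measure is complete. The structural input is that $A$ is an "upper set" for the partial order induced by the open cone $\inter C$.

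\textbf{Step 1: a cone condition at boundary points.} Fix a unit vector $e\in\inter C$ and $\delta>0$ such that the Euclidean ball $B(e,\delta)\subseteq C$; such $\delta$ exists because $C$ is full-dimensional. The key observation is that if $y\in\bd A$, then
\[
  y+\inter C\subseteq \inter A\qquad\text{and}\qquad y-\inter C\subseteq \R^m\setminus \cl A .
\]

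For the first inclusion, take $w\in\inter C$ and choose $\varepsilon>0$ with $w+B(0,\varepsilon)\subseteq\inter C$. Pick a point $a\in A$ with $|a-y|<\varepsilon/2$. Then for every $u$ with $|u|<\varepsilon/2$ we have
\[
  y+w+u=a+\bigl(w+u+(y-a)\bigr),
\]
and $w+u+(y-a)\in\inter C$ since $|u+(y-a)|<\varepsilon$. By the hypothesis $A+\inter C\subseteq A$, a neighbourhood of $y+w$ therefore lies in $A$, so $y+w\in\inter A$.

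For the second inclusion, suppose $y-w\in\cl A$ for some $w\in\inter C$. Applying the same argument with $y-w$ in place of $y$ gives $y=(y-w)+w\in\inter A$, which contradicts $y\in\bd A$.

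\textbf{Step 2: density bound.} The two inclusions give $B(y+re,r\delta)\cap A^{c}=\varnothing$ and $B(y-re,r\delta)\cap A=\varnothing$ for every $r>0$. Hence, for each $r>0$, the ball $B(y,2r)$ contains a ball of radius $r\delta$ lying in the complement of $\bd A$; in fact it contains two such balls. So $\bd A$ is porous at every one of its points, with a uniform porosity constant. Its Lebesgue density at every point is then at most $1-c$, where $c=(\delta/2)^m>0$. By the Lebesgue density theorem, almost every point of a set of positive measure has density $1$, so $\bd A$ must be a null set.

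\textbf{Alternative for Step 2.} One can avoid density arguments by slicing along the lines $z+\R e$. Each such line meets $\bd A$ in at most one point: if $y$ and $y+te$ both lie in $\bd A$ with $t>0$, then Step 1 puts $y+te$ in $\inter A$, a contradiction. Fubini then gives $\lambda_m(\bd A)=0$ immediately, and $\bd A$ is closed, hence Borel, so Fubini applies.

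\textbf{Main obstacle.} The only delicate point is the first inclusion in Step 1. It has to be derived from $A+\inter C\subseteq A$ at points of $\cl A$ rather than of $A$, and this is exactly what the small perturbation $y-a$ handles.
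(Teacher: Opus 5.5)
Your proof is correct, and it contains the paper's proof as a special case. The first inclusion of Step~1 is the paper's key inclusion $\cl A+\inter C\subseteq\inter A$, obtained by the same perturbation. Your ``Alternative for Step~2'' is exactly the paper's argument: a line parallel to $e\in\inter C$ meets the closed set $\bd A$ at most once, so Fubini--Tonelli gives $\lambda_m(\bd A)=0$, and completeness finishes.

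Your primary Step~2 is a genuinely different route, via uniform porosity of $\bd A$ and the Lebesgue density theorem, and it is sound. Two small points on it:
\begin{itemize}
\item The containment $B(y+re,r\delta)\subseteq B(y,2r)$ needs $\delta\le1$. This is automatic, because $\delta>1$ would put a neighbourhood of $0$ in $C$, contradicting pointedness.
\item A single ball, the one inside $\inter A$, already gives the density bound. So the second inclusion $y-\inter C\subseteq\R^m\setminus\cl A$ is not needed in either version of Step~2.
\end{itemize}

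On the trade-off: the slicing argument is shorter and uses only Fubini--Tonelli together with the one-sided inclusion, which is why it is the more economical choice here. The porosity argument relies on the heavier density theorem. In return it gives a stronger, scale-invariant conclusion about $\bd A$. This fits with the fact that the one-sided inclusion already makes $\bd A$ a Lipschitz graph over a subset of $e^\perp$. None of this extra structure is needed for the lemma.
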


\begin{proof}
The set $A+\inter C$ is open and contained in $A$, hence in $\inter A$.
Openness of $\inter C$ also implies that
\begin{equation}\label{eq:inter-closure}
  \cl A+\inter C\subseteq\inter A.
\end{equation}
Indeed, for $a\in\cl A$ and $h\in\inter C$, choose $a'\in A$
sufficiently close to $a$ that $h+a-a'\in\inter C$.
Then $a+h=a'+(h+a-a')\in\inter A$.
Fix a unit vector $e\in\inter C$.  Each line parallel to $e$ meets
the closed boundary $\bd A$ in at most one point: if $a,a+te\in\bd A$
with $t>0$, \eqref{eq:inter-closure} would put $a+te$ in $\inter A$.
Writing $\lambda_k$ for $k$-dimensional Lebesgue measure,
Fubini--Tonelli gives
\[
  \lambda_m(\bd A)
  =\int_{e^\perp}
     \lambda_1\bigl(\{t\in\R:y+te\in\bd A\}\bigr)
       \,\mathrm{d}\lambda_{m-1}(y)=0.
\]
Here $\lambda_0$ is unit mass on the zero-dimensional space when $m=1$.
Since $A\setminus\inter A\subseteq\bd A$, completeness of Lebesgue
measure makes $A$ measurable.
\end{proof}

\begin{lemma}\label{lem:measurable-coning}
Let $H\subset\R^d$ be a linear hyperplane and let $\mu$ be a monotone
valuation on $\CC_p^*(\R^d)$.
If $C\in\CC_p^*(H)$ is full-dimensional in $H$, then
$x\mapsto\mu(x*C)$ is measurable on $\sphere{d-1}\setminus H$
for completed spherical measure.
\end{lemma}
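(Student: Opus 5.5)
We will reduce the claim to Lemma~\ref{lem:measure}, working separately on the two open hemispheres $S^\pm=\{x\in\sphere{d-1}: \pm\langle x,u\rangle>0\}$, where $u$ is a unit normal of $H$. By symmetry it suffices to treat $S^+$. We use the gnomonic chart $\phi\colon H\to S^+$, $\phi(q)=(q+u)/\|q+u\|$. This is a diffeomorphism, so it maps Lebesgue null sets in $H\cong\R^{d-1}$ to $\sigma$-null sets and Lebesgue measurable sets to $\sigma$-measurable sets. Measurability of $f(x)=\mu(x*C)$ on $S^+$ is therefore equivalent to Lebesgue measurability of $g(q)=\mu\bigl(C+\R_{\ge0}(q+u)\bigr)$ on $H$. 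Here we used that $x*C$ depends only on the ray through $x$.

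The key monotonicity observation is as follows. For $q\in H$ and $c\in C$,
\[
  C+\R_{\ge0}(q+c+u)\subseteq C+\R_{\ge0}(q+u),
\]
because $t(q+c+u)=t(q+u)+tc$ and $tc\in C$. Hence $g(q+c)\le g(q)$ for all $c\in C$, so $g$ is nonincreasing along the cone $C$. Now fix $s\in\R$ and consider the superlevel set $A_s=\{q\in H: g(q)< s\}$. If $q\in A_s$ and $c\in C$, then $g(q+c)\le g(q)<s$, so $A_s+C\subseteq A_s$. In particular $A_s+\inter_H C\subseteq A_s$.

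Since $C$ is pointed and full-dimensional in $H$, Lemma~\ref{lem:measure} with $m=d-1$ and the cone $C\subseteq H\cong\R^{d-1}$ shows that every $A_s$ is Lebesgue measurable. Therefore $g$ is Lebesgue measurable. The case $m=1$, i.e.\ $d=2$, is covered by the lemma. Transporting back through $\phi$ gives measurability of $f$ on $S^+$ for the completed measure $\sigma$. Completeness is needed because $\phi$ preserves Lebesgue-measurable sets only up to null sets in the completion.

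For $S^-$ we argue in the same way with $u$ replaced by $-u$. Since $\sphere{d-1}\setminus H=S^+\cup S^-$, the claim follows.

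The main point to check is the direction of the inclusion $A_s+C\subseteq A_s$, which comes from monotonicity. Everything else is the routine fact that the gnomonic map is a diffeomorphism, so it pushes measurable sets and null sets forward to measurable sets and null sets.
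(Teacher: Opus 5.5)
Your proof is correct and follows the paper's argument essentially step for step: gnomonic charts on the two open hemispheres, the inclusion $C+\R_{\ge0}(q+c\pm u)\subseteq C+\R_{\ge0}(q\pm u)$ showing the chart function decreases along $C$, and Lemma~\ref{lem:measure} applied in $H$ to its sublevel sets. One small slip in terminology: $A_s=\{g<s\}$ is a sublevel set, not a superlevel set.
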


\begin{proof}
Choose a unit normal $n_H$ to $H$ and parametrise the open hemispheres by
\[
  x_\pm(v)=\frac{v\pm n_H}{\|v\pm n_H\|},\qquad
  f_\pm(v)=\mu(x_\pm(v)*C),\qquad v\in H.
\]
For $h\in C$, the inclusion
\[
  C+\R_{\ge0}(v+h\pm n_H)\subseteq C+\R_{\ge0}(v\pm n_H)
\]
gives $f_\pm(v+h)\le f_\pm(v)$.
Thus each sublevel set of $f_\pm$ is closed under addition of elements of
$\inter_H C$.  Lemma~\ref{lem:measure}, applied in $H$, makes these
sublevel sets Lebesgue measurable.  The hemisphere charts are smooth
diffeomorphisms and transfer completed Lebesgue measurability to
completed spherical measurability.
\end{proof}

\begin{theorem}\label{thm:regularity}
Every monotone $\SOn d$-invariant valuation on $\CC_p^*(\R^d)$
has a unique representation
\begin{equation}\label{eq:pointed-representation}
  \mu=\sum_{j=0}^{d-1}a_jU_j.
\end{equation}
In particular, it is continuous and $\On d$-invariant.
\end{theorem}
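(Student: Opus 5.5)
Uniqueness follows at once from the linear independence of $U_0,\ldots,U_{d-1}$ on the pointed domain, via the triangular evaluation \eqref{eq:orthant-triangular}. For existence we induct on $d$. The case $d=1$ is trivial, since the only nonzero pointed cones are the two rays and $\mu$ is constant on them. By Lemma~\ref{lem:normalization} we may replace $\mu$ by $\mu_0=\mu-2\alpha U_0$. This valuation is monotone, invariant, nonnegative, bounded, and vanishes on rays.

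The first step treats lower-dimensional cones. Restrict $\mu_0$ to cones in a fixed hyperplane $H\cong\R^{d-1}$. The restriction is monotone and $\SO(H)$-invariant, because every rotation of $H$ extends to an element of $\SOn d$. For $d-1\ge2$ the induction hypothesis gives $\mu_0=\sum_{j=1}^{d-2}b_jU_j$ on $\CC_p^*(H)$, with $b_0=0$ since $\mu_0$ vanishes on rays. The Grassmann angles are intrinsic by Proposition~\ref{prop:grassmann-facts}, and every cone of dimension at most $d-1$ lies in some hyperplane. Hence $\mu_0$ agrees with $\nu:=\sum_{j=1}^{d-2}b_jU_j$ on all such cones, and the coefficients do not depend on $H$ by invariance. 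For $d=2$ this step is vacuous.

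We still have to show $b_j\ge0$. By \eqref{eq:orthant-triangular} and induction along the orthants $O_k\subseteq H$, monotonicity gives a triangular system. I would derive $b_j\ge0$ from nonnegativity and monotonicity of $\mu_0$ on suitable lower-dimensional cones, for instance thin wedges around flats, where only the top term survives in the limit. If this proves awkward, the nonnegativity can be postponed to the end of the argument.

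The second step handles the simple remainder. Set $\eta:=\mu_0-\nu$ on $\PP_p^*(\R^d)$. It is a simple, $\SOn d$-invariant valuation. To apply Theorem~\ref{thm:simple-integrable} we need integrability of $x\mapsto\eta(F+\R_{\ge0}x)$ for each $(d-1)$-dimensional simplicial $F$. Measurability of $x\mapsto\mu_0(x*F)$ comes from Lemma~\ref{lem:measurable-coning}. The map $x\mapsto\nu(x*F)$ is continuous off $\lin F$ because the $U_j$ are continuous and $x*F$ depends continuously on $x$. Boundedness follows because $\mu_0$ is bounded by Lemma~\ref{lem:normalization} and $0\le U_j\le1$. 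The theorem then gives $\eta=c\,v_d$ on pointed polyhedral cones, with $c=\bar\eta(\R^d)$. Since $v_d=U_{d-1}-U_{d-3}-\cdots$ on cones that are not subspaces, we get $\mu_0=\sum_{j=1}^{d-1}a_jU_j$ on $\PP_p^*(\R^d)$, where the $a_j$ are finite linear combinations of the $b_j$ and $c$.

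The third step, passing to general cones, is where I expect the main difficulty. Let $C\in\CC_p^*(\R^d)$ and take $P_r\subseteq C\subseteq Q_r$ from Proposition~\ref{prop:polyhedral-sandwich}. Monotonicity gives
\[
\phi(P_r)\le\mu_0(C)\le\phi(Q_r),\qquad \phi:=\sum_j a_jU_j.
\]
Both bounds converge to $\phi(C)$ by continuity of the $U_j$, so $\mu_0=\phi$. This argument needs no sign information on the $a_j$. If $C$ is lower-dimensional, the formula already holds by the first step, though the sandwich argument also covers that case.

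Signs of the coefficients are not part of the statement, which only asserts the representation. Continuity then follows from Proposition~\ref{prop:grassmann-facts}, and $\On d$-invariance holds because each $U_j$ is $\On d$-invariant.

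The subtle points are the following. Theorem~\ref{thm:simple-integrable} requires $\eta$ to be defined only on polyhedral pointed cones, and we apply it only there. The bounded-and-measurable route does deliver integrability. Finally, the induction must be applied to the restriction of $\mu_0$ to $H$, which requires checking that this restriction is a monotone valuation on $\CC_p^*(H)$; it is, since the valuation identities and inclusions are inherited.
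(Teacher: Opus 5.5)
Your proposal is correct and follows the paper's proof essentially step for step: you normalise via Lemma~\ref{lem:normalization} and build $\nu$ from the $(d-1)$-dimensional case by restriction to hyperplanes and intrinsicness (with $\nu=0$ for $d=2$). You then feed the bounded, measurable apex functions into Theorem~\ref{thm:simple-integrable} and finish with the monotone sandwich of Proposition~\ref{prop:polyhedral-sandwich}, exactly as the paper does.

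There are three harmless slips. First, by Crofton's formula $v_d=U_{d-1}$ exactly; your expression $U_{d-1}-U_{d-3}-\cdots$ is wrong, but the conclusion is unaffected. Second, your $d=1$ aside is false, since $\SOn{1}$ is trivial and the two rays may take different values; you never use it, though, because your induction effectively starts at $d=2$. Third, the paragraph on $b_j\ge0$ is unnecessary for this statement.
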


\begin{proof}
Uniqueness follows from \eqref{eq:orthant-triangular}.
We prove existence by induction on $d$, normalising $\mu$ by
Lemma~\ref{lem:normalization} to be bounded, nonnegative, and zero
on rays.

For $d=2$, set $\nu=0$: the normalised valuation already vanishes
on all lower-dimen\-sional pointed cones.
For $d\ge3$, assume the representation in dimension $d-1$ and
construct $\nu$ as follows.
For a hyperplane $W$, the restriction of $\mu$ to $\CC_p^*(W)$ is
monotone and $\SO(W)$-invariant, since rotations of $W$ extend to
rotations of $\R^d$ fixing its normal line pointwise.
By induction,
\[
  \mu|_{\CC_p^*(W)}=\sum_{j=1}^{d-2}b_jU_j^W.
\]
The constant coefficient is zero because $\mu$ vanishes on rays.
Uniqueness and rotation invariance make the coefficients independent
of $W$.  Intrinsicness of the Grassmann angles shows that
\[
  \nu:=\sum_{j=1}^{d-2}b_jU_j
\]
agrees with $\mu$ on every lower-dimensional nonzero pointed cone.

In either case, $\eta:=\mu-\nu$ is simple, bounded, and
$\SOn d$-invariant.
Here boundedness of $\nu$ follows directly from its intrinsic-volume
representation.  Neither monotonicity nor nonnegativity of $\eta$
is assumed.

For every pointed simplicial facet $F$,
Lemma~\ref{lem:measurable-coning} makes $x\mapsto\mu(x*F)$ measurable.
The function $x\mapsto\nu(x*F)$ is continuous off $\lin F$.
Thus the apex functions for $\eta$ are measurable and bounded.
Theorem~\ref{thm:simple-integrable} gives a constant $c$ such that
\[
  \mu(P)=g(P),\qquad
  g:=\nu+cv_d=\nu+cU_{d-1},
  \qquad P\in\PP_p^*(\R^d).
\]

For $C\in\CC_p^*(\R^d)$, take the approximations from
Proposition~\ref{prop:polyhedral-sandwich}.  Monotonicity of the
original $\mu$ gives
\begin{equation}\label{eq:monotone-sandwich}
  g(P_r)=\mu(P_r)\le\mu(C)\le\mu(Q_r)=g(Q_r).
\end{equation}
Both bounds converge to $g(C)$ by continuity of the known function $g$.
Hence $\mu(C)=g(C)$.  Restoring the constant removed in the
normalisation proves \eqref{eq:pointed-representation} and completes
the induction.  Continuity and orthogonal invariance follow from
the corresponding properties of the $U_j$.
\end{proof}

We now determine the coefficient signs.

\begin{proposition}\label{prop:coefficient-signs}
The valuation $\mu=\sum_{j=0}^{d-1}a_jU_j$ on $\CC_p^*(\R^d)$
is monotone if and only if $a_j\ge0$ for $1\le j\le d-1$.
\end{proposition}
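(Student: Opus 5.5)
The proof has two directions. Sufficiency is immediate from Proposition~\ref{prop:grassmann-facts}. A nonzero pointed cone is never a linear subspace, so each $U_j$ is monotone on $\CC_p^*(\R^d)$. The term $a_0U_0=a_0/2$ is constant on nonzero pointed cones, so its sign is irrelevant. A nonnegative combination of the $U_j$ with $j\ge1$ plus a constant is therefore monotone.

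For necessity, I would argue by contradiction using nested pairs of cones that isolate a single coefficient. Fix $k\in\{1,\ldots,d-1\}$ with $a_k<0$. The natural test pair is $C\subseteq D$ with $\dim C=k$ and $\dim D=k+1$. By \eqref{eq:dimension-positive}, $U_j$ vanishes on both cones for $j\ge k+1$, and $U_k(C)=0$ while $U_k(D)>0$. Hence
\[
  \mu(D)-\mu(C)=a_kU_k(D)+\sum_{j=1}^{k-1}a_j\bigl(U_j(D)-U_j(C)\bigr).
\]
The goal is to choose the pair so that the sum over $j<k$ is negligible compared with $a_kU_k(D)$, which contradicts monotonicity.

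To achieve this, let $D$ be a thin wedge over $C$. Take a full-dimensional pointed cone $C$ in a $k$-dimensional subspace $W$, a unit vector $e\perp W$, and set
\[
  D_\varepsilon=\pos(C\cup\{z_\varepsilon\}),
\]
where $z_\varepsilon=\cos\varepsilon\,c+\sin\varepsilon\,e$ for some fixed unit vector $c\in\relint C$. As $\varepsilon\to0$, $D_\varepsilon\to C$ in the spherical Hausdorff topology. By continuity of the $U_j$, the sum over $j<k$ tends to zero, but $U_k(D_\varepsilon)$ also tends to $0$, so the comparison is a matter of rates and is the main obstacle. I would compute with intrinsic volumes via Crofton's formula \eqref{eq:crofton}. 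Since $D_\varepsilon$ is $(k+1)$-dimensional, $U_k(D_\varepsilon)=v_{k+1}(D_\varepsilon)$, which is the relative solid angle and is of order $\varepsilon$. Each $U_j(D_\varepsilon)-U_j(C)$ for $j<k$ is a finite signed sum of $v_i(D_\varepsilon)-v_i(C)$, and these differences should also be $O(\varepsilon)$, so naive rates do not separate the terms.

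A cleaner route avoids rates by induction on $k$. I would first treat $k=1$. Take $C$ a ray and $D$ a two-dimensional wedge. Then $\mu(D)-\mu(C)=a_1U_1(D)$, because $U_0$ is constant and $U_j(D)=0$ for $j\ge2$. This forces $a_1\ge0$. For general $k$, assuming $a_1,\ldots,a_{k-1}\ge0$, I would instead choose $C\subseteq D$ with $\dim C=\dim D=k+1$. Then $\mu(D)-\mu(C)=\sum_{j\le k}a_j(U_j(D)-U_j(C))$, and the task is to pick the pair so that $U_j(D)=U_j(C)$ for all $j<k$ while $U_k(D)>U_k(C)$. This pair must be exact, since dimension alone will not produce it.

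If no exact pair is available, a limiting version suffices. Take $D=L^+$, a closed half-space of a $(k+1)$-dimensional subspace, which is not pointed, approximated from inside by pointed cones. Compare it with a large pointed cone $C\subseteq L^+$. Better still, use a flag in which $C$ increases to $L^+$ while all $v_i$ with $i\le k-1$ tend to $0$, so that $U_j(C)\to U_j(L^+)$ is dominated by $v_k$ and $v_{k+1}$.

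The cleanest version I would try first is this. Let $D$ be pointed and $(k+1)$-dimensional, close to the half-space $L^+$, so that $v_{k+1}(D)\approx\tfrac12$, $v_k(D)\approx\tfrac12$ and the other $v_i$ are small. Let $C\subseteq D$ be a $k$-dimensional pointed cone close to a half-space of $L\cap\partial$, so that $v_k(C)\approx\tfrac12$ and $v_{k-1}(C)\approx\tfrac12$. Then
\[
  U_k(D)-U_k(C)\approx\tfrac12,\qquad U_j(D)-U_j(C)\approx\pm\tfrac12 \text{ for } j\equiv k-1\pmod 2,
\]
with the last differences tending to zero for the other parities. These $j<k$ terms have known coefficients $a_j\ge0$ but mixed signs, so I expect to need the two-sided approximation of Proposition~\ref{prop:polyhedral-sandwich} and careful parity bookkeeping. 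This parity bookkeeping is the step I expect to be hardest.
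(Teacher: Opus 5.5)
\paragraph{Review.} Your sufficiency argument and the case $k=1$ (a ray inside a planar wedge) are correct. For $k\ge2$, however, necessity is not proved: the proposal stops at an unresolved ``parity bookkeeping'' step, and the approximate values you feed into it are wrong.

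\textbf{The values in your final construction.} In your final construction, $D$ tends to a relative half-space $H_{k+1}$ of a $(k+1)$-dimensional subspace, and $C$ tends to a relative half-space $H_k$ of a $k$-dimensional one. Write $H_m=\{x\in E_m:\langle x,e\rangle\ge0\}$ with $\dim E_m=m$ and $e\in E_m$ a unit vector. Then $H_m\cup(-H_m)=E_m$ and $H_m\cap(-H_m)=E_m\cap e^\perp$. The valuation property and invariance therefore give $v_{m-1}(H_m)=v_m(H_m)=\tfrac12$, with all other $v_i(H_m)=0$. By \eqref{eq:crofton}, $U_i(H_m)=\tfrac12$ for $i<m$ and $U_i(H_m)=0$ for $i\ge m$.

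\textbf{Consequence.} Hence $U_j(D)-U_j(C)\to0$ for every $j<k$, not $\pm\tfrac12$. When $j\equiv k\pmod 2$, the contributions of $v_{k-1}$ and $v_{k+1}$ enter with opposite signs and cancel. When $j\equiv k-1\pmod 2$, only $v_k$ enters, and $v_k(D)-v_k(C)\to0$. Only $U_k(D)-U_k(C)\to\tfrac12$ survives, so $\mu(D)-\mu(C)\to a_k/2$ and $a_k\ge0$ follows at once. You need no induction on $k$, no exact pair, and no Proposition~\ref{prop:polyhedral-sandwich}. You only need continuity of the $U_j$, extended to all cones by \eqref{eq:crofton}.

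\textbf{Remaining gap: the explicit pair.} To make this rigorous you still need an explicit pair $C_\eps\subseteq D_\eps$ of nonzero pointed cones with these limits. Your description of $C$ (``a half-space of $L\cap\partial$'') does not pin this down. For instance, a pointed cap $\{x\in L:\langle x,u\rangle\ge\eps\|x\|\}$ meets the boundary hyperplane of $L^+$ only in $\{\zero\}$.

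\textbf{The paper's construction.} The paper takes a flag $E_1\subset\cdots\subset E_d$ and a unit vector $e\in E_1$, and uses the caps $H_{m,\eps}=\{x\in E_m:\langle x,e\rangle\ge\eps\|x\|\}$. These are pointed, nested in $m$, and converge to $H_m$ as $\eps\to0$. Applying monotonicity to $H_{j,\eps}\subseteq H_{j+1,\eps}$ and letting $\eps\to0$ gives $0\le\widetilde\mu(H_{j+1})-\widetilde\mu(H_j)=a_j/2$ for $1\le j\le d-1$. With this construction and the corrected computation, your last approach becomes exactly the paper's proof.
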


\begin{proof}
Sufficiency follows because $U_0$ is constant on nonzero pointed
cones and the other $U_j$ are monotone there.
Conversely, let $\widetilde\mu=\sum_{j=0}^{d-1}a_jU_j$ be the
continuous extension defined by the intrinsic-volume formulae.
Choose a complete flag $E_1\subset\cdots\subset E_d=\R^d$ and a unit
vector $e\in E_1$.  The relative halfspaces
\[
  H_m=\{x\in E_m:\langle x,e\rangle\ge0\},\qquad 1\le m\le d,
\]
have $v_{m-1}(H_m)=v_m(H_m)=1/2$ and all other intrinsic volumes zero.
Indeed, the valuation property and orthogonal invariance give
$2v_i(H_m)=v_i(E_m)+v_i(E_m\cap e^\perp)$.
The intrinsic-volume formula \eqref{eq:crofton} therefore gives
\[
  U_i(H_m)=
  \begin{cases}\tfrac12,&i<m,\\0,&i\ge m.\end{cases}
\]
For $0<\eps<1$, the pointed cones
\[
  H_{m,\eps}=\{x\in E_m:\langle x,e\rangle\ge\eps\|x\|\}
\]
are nested in $m$ and converge to $H_m$ as $\eps\rightarrow 0$.
Monotonicity on these pointed cones and continuity of
$\widetilde\mu$ imply
\[
  0\le\widetilde\mu(H_{j+1})-\widetilde\mu(H_j)
    =\frac{a_j}{2},\qquad 1\le j\le d-1.\qedhere
\]
\end{proof}

\begin{proof}[Proof of Corollary~\ref{cor:problem49}]
Combine Theorem~\ref{thm:regularity} and
Proposition~\ref{prop:coefficient-signs}.
\end{proof}

\begin{remark}\label{rem:zero-monotonicity}
If the valuation is extended by $\mu(\{\zero\})=0$ and monotonicity
is also required for $\{\zero\}\subset C$, then one must additionally
have $a_0\ge0$.  In that convention the monotone cone is generated by
$U_0,\ldots,U_{d-1}$ with nonnegative coefficients.
\end{remark}

An earlier theorem of Schneider classifies nonnegative simple invariant
valuations on proper spherical polytopes
\cite[Theorem~6.2]{Sch78:Curvature-Measures},
\cite[Theorem~3.3.2]{Sch22:Convex-Cones}.
It cannot be applied directly to the simple remainder in our induction,
since $\mu-\nu$ need not be nonnegative.
Theorem~\ref{thm:simple-integrable} replaces this sign condition by
the apex integrability implied by monotonicity.

\section{Classification on all cones}\label{sec:all-cones}

On the full cone space, the natural monotone coordinates are the
intrinsic-volume tails
\begin{equation}\label{eq:tail-functional}
  W_j:=\sum_{k=j}^{d}v_k,\qquad 0\le j\le d.
\end{equation}
They are continuous $\On d$-invariant valuations, with $W_0=1$.
For $1\le j\le d$, their standard projection representation is
\begin{equation}\label{eq:tail-projection}
  W_j(C)=\int_{G(d,j)}
     \gamma_L\bigl(\cl(C\vert L)\bigr)\,\mathrm{d}L,
\end{equation}
where $C\vert L$ denotes orthogonal projection, $\gamma_L$ is normalised
solid angle in $L$, and $\mathrm{d}L$ is Haar probability measure
\cite[equations~(2.76)--(2.77) and (4.70)]{Sch22:Convex-Cones}.
Projection, closure and solid angle preserve inclusions, so every $W_j$ is
monotone.

In contrast, the Grassmann angles need not be globally monotone,
owing to their subspace values \eqref{eq:grassmann-subspace}.
For arbitrary coefficients,
\begin{equation}\label{eq:tail-expansion}
  \sum_{k=0}^{d}a_kv_k
  =a_0W_0+\sum_{j=1}^{d}(a_j-a_{j-1})W_j.
\end{equation}

\begin{proof}[Proof of Theorem~\ref{thm:global-classification}]
Fix an orthonormal basis $e_1,\ldots,e_d$ of $\R^d$.
Set $L_0:=\{\zero\}$ and $L_k:=\lin(e_1,\ldots,e_k)$ for $1\le k\le d$.
If $\mu$ is monotone and invariant, set $a_k:=\mu(L_k)$ and
\[
  \mu_c:=\sum_{k=0}^{d}a_kv_k.
\]
The coefficients are nondecreasing, and $v_i(L_k)=\delta_{ik}$ implies
$\mu_c(L_k)=\mu(L_k)$.  Formula~\eqref{eq:tail-expansion} shows that
$\mu_c$ is monotone.  Applying Theorem~\ref{thm:regularity} separately
to the pointed restrictions of $\mu$ and $\mu_c$, we obtain
\[
  \zeta:=\mu-\mu_c=\sum_{j=0}^{d-1}c_jU_j
  \quad\text{on }\CC_p^*(\R^d).
\]
We show that these coefficients vanish by evaluating on orthants.

Let $O_k:=\pos(e_1,\ldots,e_k)$ for $1\le k\le d$, with $O_0:=L_0$.
Repeated coordinate-hyperplane cutting in $L_k$ gives
\begin{equation}\label{eq:orthant-recovery}
  0=\zeta(L_k)
   =\sum_{i=0}^{k}(-1)^{k-i}2^i\binom{k}{i}\zeta(O_i).
\end{equation}
Here $2^i\binom{k}{i}$ counts the $i$-dimensional orthants, which are
congruent under $\SOn d$. For full-dimensional orthants an odd
coordinate permutation corrects the determinant if necessary.
Since $\zeta(O_0)=0$, induction on $k$ in
\eqref{eq:orthant-recovery} yields $\zeta(O_k)=0$ for every $k$.
The triangular relations \eqref{eq:orthant-triangular} now give
$c_0=\cdots=c_{d-1}=0$.

Thus $\zeta$ vanishes on the zero cone and on every nonzero pointed
cone.  To conclude, cut any $C\in\CC(\R^d)$ successively by all
coordinate hyperplanes.  Every term in the resulting valuation identity
is $C$ intersected with a coordinate orthant or one of its faces, hence
is pointed or zero.  All terms vanish, so $\zeta(C)=0$.
This proves $\mu=\mu_c$ on the full cone space and, in particular,
automatic continuity.

Conversely, if $a_0\le\cdots\le a_d$, then
\eqref{eq:tail-expansion} is a constant plus a nonnegative combination
of monotone tails.
Continuity, the valuation property, and orthogonal invariance follow
from those of the intrinsic volumes.
\end{proof}

\section*{Acknowledgements}
The author would like to thank Dennis Amelunxen for insightful discussions on
intrinsic volumes and valuations, and Matthias Reitzner and Fabian Mu\ss nig for
pointing out the remarkable work by Jonas Knoerr.

\section*{Statement on the use of generative AI}
The author made use of OpenAI's Codex as follows: general mathematical discussions,
checking the literature, proofreading and corrections, and the details of the 
measurability argument in Lemma~\ref{lem:measure}.
Unless otherwise stated, the ideas are the author's own and the author is responsible 
for the content of this paper.

\begingroup
\small
\bibliographystyle{alpha}
\bibliography{refs}
\endgroup

\end{document}